\definecolor {processblue}{cmyk}{0.96,0,0,0}
\newtheorem{thm}{Theorem}[section]
\theoremstyle{definition}
\newtheorem{cor}[thm]{Corollary}
\newtheorem{prop}[thm]{Proposition}
\newtheorem{defn}[thm]{Definition}
\newtheorem{lem}[thm]{Lemma}
\newtheorem{no}[thm]{Notation}
\newtheorem{rem}[thm]{Remark}
\newtheorem{ex}[thm]{Example}
\newtheorem{co}[thm]{Question}
\numberwithin{equation}{section}
\begin{document}
\title[Ideal-based quasi cozero divisor graph of a commutative ring]
{Ideal-based quasi cozero divisor graph of a commutative ring}
\author{ F. Farshadifar}
\address{Department of Mathematics Education, Farhangian University, P.O. Box 14665-889, Tehran, Iran.}
\email{f.farshadifar@cfu.ac.ir}

\subjclass[2020]{05C25, 13A70}
\keywords {Graph, zero-divisor, cozero-divisor, quasi, connected}

\begin{abstract}
Let $R$ be a commutative ring with
identity and $I$ be an ideal of $R$.
The zero-divisor graph of $R$ with respect to $I$, denoted by $\Gamma_I(R)$, is the graph whose vertices are the set $\{ x \in R\setminus I\, |\, xy \in I\: for\: some\: y \in R \setminus I\}$ with distinct vertices $x$ and $y$ are adjacent if and only if $xy \in I$.
The cozero-divisor graph with respect to $I$, denoted by $\Gamma''_I(R)$, is the
graph of $R$ with vertices $\{x \in R\setminus I\: |\: xR+I\not=R \}$ and two distinct vertices $x$ and $y$ are adjacent if and only if  $x \not \in yR+I$ and $y \not \in xR+I$.
In this paper, we introduced and investigated an undirected graph $Q\Gamma''_I(R)$ of $R$ with vertices $\{x \in R\setminus \sqrt{I}\: |\: xR+I\not=R \ and \ xR+\sqrt{I}=xR+I\}$ and two distinct vertices $x$ and $y$ are adjacent if and only if $x \not \in yR+I$ and $y \not \in xR+I$.
\end{abstract}
\maketitle
\section{Introduction}
\noindent
Throughout this paper, $R$ will denote a commutative ring with identity, $\Bbb Z$, and $\Bbb Z_n$ will denote the rings of of integers and  integers modulo $n$, respectively. Also, the set of maximal ideals and the Jacobson radical of $R$ will denote by $Max(R)$ and $J(R)$, respectively.
For an ideal $I$ of $R$, let $\mathcal{M}(I)$ be the set of maximal ideals of $R$ containing
$I$ and the intersection of all maximal ideals of $R$ containing $I$ is denote by $J_I(R)$.

A \emph{graph} $G$ is defined as the pair $(V(G),E(G))$, where $V(G)$ is the set of vertices of $G$ and $E(G)$ is the set of edges of $G$. For two distinct vertices $a$ and $b$ of $V(G)$, the notation $a-b$ means that $a$ and $b$ are adjacent. A graph $G$ is said to be \emph{complete} if $a-b$ for all distinct $a, b\in V(G)$, and $G$ is said to be \emph{empty} if $E(G) =\emptyset$. Note by this definition that a graph may be empty even if $V (G)\not =\emptyset$. An empty graph could also be described as totally disconnected. If $|V (G)|\geq 2$, a \emph{path} from $a$ to $b$ is a series
of adjacent vertices $a-v_1-v_2-...-v_n-b$. The \emph{length of a path} is the number of edges it contains. A \emph{cycle} is a path that begins and ends at the same vertex in which no edge is repeated, and all vertices other than the starting and ending vertex are distinct. If a graph $G$ has a cycle, the \emph{girth} of $G$ (notated $g(G)$) is defined as the length of the shortest cycle of $G$; otherwise, $g(G) =\infty$. A graph $G$ is \emph{connected} if for every pair of distinct vertices $a, b\in V (G)$, there exists a path from $a$ to $b$. If there is a path from $a$ to $b$ with $a, b \in V (G)$,
then the \emph{distance from} $a$ to $b$ is the length of the shortest path from $a$ to $b$ and is denoted by $d(a, b)$. If there is not a path between $a$ and $b$, $d(a, b) = \infty$. The \emph{diameter} of $G$ is diam$(G) = Sup\{d(a,b)| a, b \in V(G)\}$.
A graph is said to be \textit{planar} if it can be drawn in the plane so that its edges intersect only at their ends.

Let $Z(R)$ be the set of all zero-divisors of $R$. Anderson and Livingston, in \cite{2}, introduced the \emph{zero-divisor graph of R}, denoted by $\Gamma(R)$, as the (undirected) graph with vertices $Z^*(R) = Z(R)\backslash \{0\}$ and for two distinct elements $x$ and $y$ in $Z^*(R)$, the vertices $x$ and $y$ are adjacent if and only if $xy = 0$.
In \cite{3}, Redmond introduced the definition of the zero-divisor graph with respect to an ideal. Let $I$ be an ideal of $R$. The \emph{zero-divisor graph of $R$ with respect to $I$}, denoted by $\Gamma_I(R)$, is the graph whose vertices are the set $\{ x \in R\setminus I\, |\, xy \in I\ for\ some\ y \in R \setminus I\}$ with distinct vertices $x$ and $y$ are adjacent if and only if $xy \in I$. Thus if $I = 0$, then $\Gamma_I(R) = \Gamma(R)$.

In \cite{1}, Afkhami and Khashayarmanesh introduced the \emph{cozero-divisor graph} $\Gamma'(R)$ of $R$, in which the vertices are precisely the non-zero, non-unit elements of $R$, denoted by $W^*(R)$, and two distinct vertices $x$ and $y$ are adjacent if and only if $x \not \in yR$ and $y \not \in xR$. Let $I$ be an ideal of $R$. Farshadifar in \cite{403}, introduced and studied a generalization of cozero-divisor graph with respect to $I$, denoted by $\Gamma''_I(R)$, which is
an undirected graph with vertices $\{x \in R\setminus I\: |\: xR+I\not=R \}$ and two distinct vertices $x$ and $y$ are adjacent if and only if  $x \not \in yR+I$ and $y \not \in xR+I$.
In fact, $\Gamma''_I(R)$ can be regarded as a dual notion of ideal-based zero-divisor graph introduced in \cite{3} and also,
$\Gamma''_I(R)$ is a generalization of cozero-divisor graph introduced in \cite{1} when $I = R$.

Let $I$ be a proper ideal of $R$. The \textit{ideal-based quasi zero divisor graph $Q\Gamma_I(R)$ of $R$ with respect to $I$} which is
an undirected graph with vertex set $\{ x \in R\setminus \sqrt{I}\, |\, xy \in I\: for\: some\: y \in R \setminus \sqrt{I}\}$ with two distinct vertices $x$ and $y$ are adjacent if and only if $xy \in I$ \cite{MR4351492}.

In this paper, we introduce and study an undirected graph $Q\Gamma''_I(R)$ of $R$ with vertices $\{x \in R\setminus \sqrt{I}\: |\: xR+I\not=R \ and \ xR+\sqrt{I}=xR+I\}$ and two distinct vertices $x$ and $y$ are adjacent if and only if $x \not \in yR+I$ and $y \not \in xR+I$, where $I$ is an ideal of $R$.
\section{Main Results}
\begin{defn}\label{2.1}
Let $I$ be an ideal of $R$. We define an undirected graph $Q\Gamma''_I(R)$ of $R$ with vertices $\{x \in R\setminus \sqrt{I}\: |\: xR+I\not=R \ and \ xR+\sqrt{I}=xR+I\}$. The distinct vertices $x$ and $y$ are adjacent if and only if $x \not \in yR+I$ and $y \not \in xR+I$. This can be regarded as a dual notion of ideal-based quasi zero-divisor graph introduced in \cite{MR4351492}.
\end{defn}

\begin{rem}\label{66.1}
\begin{itemize}
\item [(a)] Clearly, for an ideal $I$ of $R$, we have $xR+\sqrt{I}\not=R$ if and only if $xR+I\not=R$ for each $x \in R$.
\item [(b)] Clearly, if $I$ is an ideal of $R$ such that $\sqrt{I}$ is a maximal ideal of $R$ or $I=R$, then $Q\Gamma''_I(R) = \emptyset$. So, in the rest of this paper $\sqrt{I}$ is a non-maximal ideal of $R$ and $I \not=R$.
\end{itemize}
\end{rem}

\begin{prop}\label{2.4}
Let $I$ be an ideal of $R$. Then we have the following.
\begin{itemize}
\item [(a)] If $R/I$ is a reduced ring (or equivalently, if
$\sqrt{I}=I$), then $Q\Gamma''_I(R)=\Gamma''_I(R)$.
\item [(b)] $Q\Gamma''_I(R)$ is a subgraph of $\Gamma''_{\sqrt{I}}(R)$.
\item [(c)] $\Gamma''_{\sqrt{I}}(R)$ is an induced subgraph of $\Gamma''_I(R)$.
\item [(d)] If $J$ is an ideal of $R$, then $V(Q\Gamma''_J(R))\setminus T_I\subseteq V(Q\Gamma''_I(R))$. Moreover,
$Q\Gamma''_I(R)\setminus T_I$ is an induced subgraph of $Q\Gamma''_I(R)$.
\end{itemize}
 \end{prop}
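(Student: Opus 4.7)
The plan is to verify each clause by unraveling definitions, using Remark \ref{66.1}(a) as the bridge between the maximality conditions for $I$ and $\sqrt{I}$.

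For part (a), when $\sqrt{I}=I$ the extra vertex condition $xR+\sqrt{I}=xR+I$ becomes vacuous and $R\setminus\sqrt{I}=R\setminus I$, so $Q\Gamma''_I(R)$ and $\Gamma''_I(R)$ share the same vertex set and the same adjacency rule. For part (b), if $x\in V(Q\Gamma''_I(R))$ then $x\in R\setminus\sqrt{I}$ and $xR+\sqrt{I}=xR+I\neq R$, so $x\in V(\Gamma''_{\sqrt{I}}(R))$; and for an edge between $x$ and $y$ in $Q\Gamma''_I(R)$, the quasi conditions $yR+I=yR+\sqrt{I}$ and $xR+I=xR+\sqrt{I}$ convert $x\notin yR+I$ and $y\notin xR+I$ into $x\notin yR+\sqrt{I}$ and $y\notin xR+\sqrt{I}$, producing the same edge in $\Gamma''_{\sqrt{I}}(R)$.

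For part (c), vertex inclusion uses $R\setminus\sqrt{I}\subseteq R\setminus I$ together with Remark \ref{66.1}(a) applied to transfer $xR+\sqrt{I}\neq R$ to $xR+I\neq R$. The subgraph direction for edges follows from $yR+I\subseteq yR+\sqrt{I}$, which forces $x\notin yR+\sqrt{I}$ to imply $x\notin yR+I$. The induced direction is the delicate point and where I expect the main obstacle to lie: the reverse implication $x\notin yR+I\Rightarrow x\notin yR+\sqrt{I}$ is not formal, and one must leverage $x,y\in R\setminus\sqrt{I}$ to rule out witnesses from $\sqrt{I}\setminus I$ lying in $(yR+\sqrt{I})\setminus(yR+I)$.

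For part (d), assuming $T_I$ denotes the set of vertices that fail the $I$-version of the quasi condition (the most natural reading given the context), I would verify the three defining clauses of $V(Q\Gamma''_I(R))$ one at a time for an arbitrary element of $V(Q\Gamma''_J(R))\setminus T_I$. The moreover clause is then automatic, since deleting any vertex subset from a graph always yields an induced subgraph of the original graph.
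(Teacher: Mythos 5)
The paper's own ``proof'' of this proposition is literally ``These are clear,'' so your write-up is already more explicit than the source; parts (a) and (b) are handled correctly and completely. The genuine gap is in part (c). You correctly isolate the non-trivial half --- that two vertices of $\Gamma''_{\sqrt{I}}(R)$ which are adjacent in $\Gamma''_I(R)$ must be adjacent in $\Gamma''_{\sqrt{I}}(R)$ --- but you only announce that ``one must leverage $x,y\in R\setminus\sqrt{I}$'' without producing the argument, and in fact no such argument exists: the implication is false in general. Take $R=k[Y,S]/(S^2)$, $I=0$, so $\sqrt{I}=(S)$ (which is not maximal, so the standing hypothesis of Remark \ref{66.1}(b) holds), and let $y=Y$, $x=Y+S$. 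Both are vertices of $\Gamma''_{(S)}(R)$ and of $\Gamma''_0(R)$; a direct computation shows $Y+S\notin (Y)$ and $Y\notin (Y+S)$, so $x$ and $y$ are adjacent in $\Gamma''_0(R)$, yet $Y+S\in (Y)+(S)$, so they are not adjacent in $\Gamma''_{(S)}(R)$. Thus only the plain ``subgraph'' assertion of (c) survives (which your containment $yR+I\subseteq yR+\sqrt{I}$ does prove); the ``induced'' claim cannot be rescued by the route you sketch, and flagging the obstacle is not the same as overcoming it.

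Part (d) has a similar problem. Your guess for $T_I$ is the right one ($T_I=\{x\in R \mid Rx+I\neq Rx+\sqrt{I}\}$, introduced only later in the paper as Notation \ref{nnn}), but your plan to ``verify the three defining clauses one at a time'' only succeeds for one of them: from $x\in V(Q\Gamma''_J(R))\setminus T_I$ with $J$ an arbitrary ideal you do get $xR+I=xR+\sqrt{I}$ (that is exactly $x\notin T_I$), but nothing forces $x\notin\sqrt{I}$ or $xR+I\neq R$. For instance, with $R=\Bbb Z$, $J=6\Bbb Z$, $I=15\Bbb Z$, $x=2$ we have $x\in V(Q\Gamma''_J(R))$ and $T_I=\emptyset$, yet $2\Bbb Z+15\Bbb Z=\Bbb Z$, so $x\notin V(Q\Gamma''_I(R))$. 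The stated inclusion needs some hypothesis tying $I$ to $J$ (the statement is almost certainly misprinted), and your verification strategy cannot close this without one. Only the ``moreover'' clause of (d) is, as you say, automatic, since deleting vertices always yields an induced subgraph.
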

\begin{proof}
These are clear.
 \end{proof}

 \begin{ex}\label{2.2}
Let $R=\Bbb Z_{12}$ and $I=0$. Then in the Figures (A), (B), and (C)  we can see that the graph
$Q\Gamma''_I(R)$ is an induced subgraph of $\Gamma''_{\sqrt{I}}(R)$ and the graph $\Gamma''_{\sqrt{I}}(R)$ is an induced subgraph of $\Gamma''_I(R)$.
\begin{figure}[H]
\centering
\begin{subfigure}[b]{0.49\textwidth}
\centering
\caption{$Q\Gamma''_I(R)$.}
\begin{center}
\begin{tikzpicture}[auto,node distance=1.5cm,
  thick,main node/.style={circle,fill=black!10,font=\sffamily\tiny\bfseries}]
\node[main node] (1) {3};
\node[main node] (2)[below left of=1] {2};
\node[main node] (3)[below right of=1] {10};
\node[main node] (7) [below right of=2] {9};
\path[every node/.style={font=\sffamily\small}]
    (1) edge node [left] {} (2)
     (1) edge node [left] {} (3)
              (7) edge node [left] {} (2)
         (7) edge node [left] {} (3);
\end{tikzpicture}
\end{center}
\end{subfigure}
\hfill
\begin{subfigure}[b]{0.49\textwidth}
\centering
\caption{$\Gamma''_I(R)$.}
\begin{center}
\begin{tikzpicture}[auto,node distance=1.5cm,
  thick,main node/.style={circle,fill=black!10,font=\sffamily\tiny\bfseries}]
\node[main node] (1) {3};
\node[main node] (2)[below left of=1] {2};
\node[main node] (3)[below right of=1] {10};
\node[main node] (4) [right of=3] {8};
\node[main node] (5) [right of=4] {6};
\node[main node] (6) [right of=5] {4};
\node[main node] (7) [below right of=2] {9};
\path[every node/.style={font=\sffamily\small}]
    (1) edge node [left] {} (2)
     (1) edge node [left] {} (3)
      (1) edge node [left] {} (4)
       (7) edge node [left] {} (6)
        (7) edge node [left] {} (2)
         (7) edge node [left] {} (3)
          (7) edge node [left] {} (4)
           (7) edge node [left] {} (6)
            (1) edge node [left] {} (6)
             (5) edge node [left] {} (4)
                 (5) edge node [left] {} (6);
\end{tikzpicture}
\end{center}
\end{subfigure}
\hfill
\begin{subfigure}[b]{0.49\textwidth}
\centering
\caption{$\Gamma''_{\sqrt{I}}(R)$.}
\begin{center}
\begin{tikzpicture}[auto,node distance=1.5cm,
  thick,main node/.style={circle,fill=black!10,font=\sffamily\tiny\bfseries}]
\node[main node] (1) {3};
\node[main node] (2)[below left of=1] {2};
\node[main node] (3)[below right of=1] {10};
\node[main node] (4) [right of=3] {8};
\node[main node] (6) [right of=5] {4};
\node[main node] (7) [below right of=2] {9};
\path[every node/.style={font=\sffamily\small}]
    (1) edge node [left] {} (2)
     (1) edge node [left] {} (3)
      (1) edge node [left] {} (4)
       (7) edge node [left] {} (6)
        (7) edge node [left] {} (2)
         (7) edge node [left] {} (3)
          (7) edge node [left] {} (4)
           (7) edge node [left] {} (6)
            (1) edge node [left] {} (6);

\end{tikzpicture}
\end{center}
\end{subfigure}
\end{figure}
 \end{ex}

 An ideal $I$ of $R$ is said to be a \emph{secondary ideal} if $I \neq 0$ and for every element $r$ of $R$ we have either $r\in \sqrt{I}$ or $rI= I$.
\begin{lem}\label{second}
$R/I$ is a secondary ideal of $R/I$ if and only if $Q\Gamma''_I(R) = \emptyset$.
\begin{proof}
Straightforward.
\end{proof}
\end{lem}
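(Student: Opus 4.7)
The plan is a direct definition-chase. Applying the paper's definition of a secondary ideal to the ring $R/I$ with the ambient ideal taken to be $R/I$ itself, the statement ``$R/I$ is a secondary ideal of $R/I$'' unwinds to: for every $\bar r\in R/I$, either $\bar r$ is nilpotent in $R/I$ (equivalently $r\in\sqrt{I}$) or $\bar r(R/I)=R/I$ (equivalently $rR+I=R$). These are exactly the negations of the first two conditions appearing in the definition of $V(Q\Gamma''_I(R))$ in Definition~\ref{2.1}.

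For the forward direction, I would simply note that under the secondary hypothesis no element of $R$ can simultaneously satisfy $x\notin\sqrt{I}$ and $xR+I\neq R$, so the vertex set of $Q\Gamma''_I(R)$ is empty and hence $Q\Gamma''_I(R)=\emptyset$.

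For the reverse direction, I would argue contrapositively: if $R/I$ is not secondary, pick $r\in R\setminus\sqrt{I}$ with $rR+I\neq R$ and try to exhibit $r$ as a vertex of $Q\Gamma''_I(R)$. The first two vertex conditions hold by construction, so the only remaining task is to verify or engineer the third condition $rR+\sqrt{I}=rR+I$. When $\sqrt{I}=I$ this is automatic by Proposition~\ref{2.4}(a); in general one replaces $r$ by a suitable representative modulo $\sqrt{I}$ so that the principal ideal generated by the new element absorbs $\sqrt{I}\setminus I$. Either way, a vertex is produced, contradicting $Q\Gamma''_I(R)=\emptyset$.

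The main obstacle I anticipate is precisely this third vertex condition in the reverse direction, since it is neither automatic nor implied by the failure of secondariness alone. The forward direction, by contrast, is little more than a negation of definitions, which is why the authors are content to write ``Straightforward.''
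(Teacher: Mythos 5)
Your forward direction is fine and is indeed immediate: once ``$R/I$ is a secondary ideal of $R/I$'' is read as the secondary \emph{module} condition (every $r\in R$ satisfies $r\in\sqrt{I}$ or $rR+I=R$), it negates the first two membership conditions in the definition of $V(Q\Gamma''_I(R))$, so the vertex set is empty. Do note that this reading is a reinterpretation rather than the paper's literal definition: applied verbatim to the ideal $R/I$ of the ring $R/I$, the condition ``$\bar r\in\sqrt{R/I}$'' is vacuously true, so the literal statement degenerates. The module reading is surely what is intended, but it should be said explicitly.

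The reverse direction is where the proposal fails, and you have correctly located the obstruction without overcoming it. The step ``one replaces $r$ by a suitable representative modulo $\sqrt{I}$ so that the principal ideal generated by the new element absorbs $\sqrt{I}\setminus I$'' is not an argument, and no such replacement exists in general. Take $R=\Bbb Z_4[y]$ and $I=0$, so $\sqrt{I}=2R$ is a non-maximal radical and $I\neq R$, in accordance with Remark~\ref{66.1}(b). The element $y$ is neither nilpotent nor a unit, so $R/I=R$ is not secondary. Yet $V(Q\Gamma''_0(R))=\emptyset$: a vertex would be a non-nilpotent non-unit $x$ with $2\in xR$, and reducing $2=xf$ modulo $2$ in the domain $\Bbb F_2[y]$ gives $\bar x\bar f=0$ with $\bar x\neq 0$, hence $f=2g$; then $2(xg-1)=0$ in $\Bbb Z_4[y]$ forces $\bar x\bar g=1$ in $\Bbb F_2[y]$, which makes $x$ a unit --- a contradiction. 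So the third vertex condition $xR+\sqrt{I}=xR+I$ genuinely breaks the equivalence: the implication $Q\Gamma''_I(R)=\emptyset\Rightarrow R/I$ secondary is false without an extra hypothesis (for instance $\sqrt{I}=I$, where the claim reduces to the analogous statement for $\Gamma''_I(R)$, or some condition such as sum-radicality that lets one repair the third condition). Your proof cannot be completed as written, and the paper's ``Straightforward'' conceals the same gap.
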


 \begin{prop}\label{2.5}
Let $I$ be a non-zero proper ideal of $R$. Then $Q\Gamma''_I(R)$ is not a
complete graph.
 \end{prop}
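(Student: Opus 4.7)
The plan is to show that whenever $Q\Gamma''_I(R)$ has any vertex at all, it has two distinct non-adjacent vertices, which by the definition of completeness given in the introduction is all that is required.

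Concretely, I fix a vertex $x\in V(Q\Gamma''_I(R))$ and, using the hypothesis $I\neq 0$, choose a nonzero element $i\in I$; the proposed non-neighbour is $y:=x+i$. Since $i\neq 0$ we have $y\neq x$. The first task is to check that $y$ is itself a vertex. The condition $y\notin\sqrt{I}$ holds because $i\in I\subseteq\sqrt{I}$, so if $x+i$ were in $\sqrt{I}$ then $x=(x+i)-i$ would be too, contradicting the choice of $x$. Both remaining conditions $yR+I\neq R$ and $yR+\sqrt{I}=yR+I$ reduce, via the identity $yR+J=(x+i)R+J=xR+iR+J=xR+J$ valid for any ideal $J\supseteq I$, to the corresponding conditions for $x$, which hold by hypothesis.

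With $y$ confirmed as a vertex distinct from $x$, non-adjacency is immediate: the equality $x=y\cdot 1+(-i)$ with $-i\in I$ places $x$ in $yR+I$, so the pair $(x,y)$ violates the defining condition $x\notin yR+I$ for adjacency. Thus $x$ and $y$ are two distinct, non-adjacent vertices and $Q\Gamma''_I(R)$ is not complete.

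There is no real obstacle here; the only slightly fiddly step is the routine verification of the three vertex conditions for $x+i$, all of which fall out of $i\in I\subseteq\sqrt{I}$. The conceptual content is that translation by a nonzero element of $I$ produces a distinct vertex that is \emph{equivalent} to the original, in the sense that each lies in the other's $(\,\cdot\,)R+I$, forcing non-adjacency. This is precisely why the hypothesis $I\neq 0$ is needed: for $I=0$ the translate coincides with $x$ and the argument collapses.
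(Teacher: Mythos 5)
Your proof is correct and follows essentially the same route as the paper: take a vertex $x$, a nonzero $i\in I$, and observe that $x+i$ is a distinct vertex lying in the same coset modulo $I$, hence non-adjacent to $x$. The paper simply declares ``it is clear that $x+i$ is also a vertex,'' whereas you verify the three vertex conditions explicitly; the substance is identical.
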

\begin{proof}
Assume that $x$ is a vertex of $Q\Gamma''_I(R)$. It is
clear that $x + i\not= x$ is also a vertex of $Q\Gamma''_I(R)$, where $0\not= i \in I$. Now, $x+i \in Rx+I$ implies that $x$ is not adjacent to $x+i$. Thus $Q\Gamma''_I(R)$ is not complete.
\end{proof}

\begin{thm}\label{2.6}
Let $n=p^{\alpha}q^{\beta}$, where $p$ and $q$ are distinct prime numbers and consider $R=\Bbb Z_n$. Then for every proper ideal $I$ of $R$, we have $\Gamma''_{\sqrt{I}(R)}=\emptyset$ or $\Gamma''_{\sqrt{I}}(R)$ is a complete bipartite graph.
\end{thm}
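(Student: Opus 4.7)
The plan is to exploit the very limited ideal structure of $R=\mathbb{Z}_n$ with $n=p^\alpha q^\beta$. Every proper ideal of $R$ has the form $I=(p^a q^b)$ with $0\le a\le \alpha$, $0\le b\le \beta$ and $(a,b)\neq (0,0)$, so its radical is one of only three possibilities: $\sqrt{I}=(p)$ when $b=0$, $\sqrt{I}=(q)$ when $a=0$, and $\sqrt{I}=(pq)$ when $a,b\ge 1$. Since the conclusion depends only on $\sqrt{I}$, I would reduce the problem to analyzing $\Gamma''_J(R)$ for $J\in\{(p),(q),(pq)\}$.

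The cases $J=(p)$ and $J=(q)$ are immediate, since these are the two maximal ideals of $R$: if $J$ is maximal and $x\in R\setminus J$, then $xR+J=R$, so the vertex set of $\Gamma''_J(R)$ is empty. This disposes of the first alternative of the theorem.

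The main case is $J=(pq)$. Here I would partition the vertex set of $\Gamma''_{(pq)}(R)$ into
$$A=\{x\in R : p\mid x,\ q\nmid x\}\quad\text{and}\quad B=\{y\in R : q\mid y,\ p\nmid y\}.$$
Membership in the vertex set requires $pq\nmid x$ together with $xR+(pq)\neq R$; since the only maximal ideals containing $(pq)$ are $(p)$ and $(q)$, these two conditions together single out exactly $A\cup B$. A short gcd calculation in $\mathbb{Z}_n$ then yields $xR+(pq)=(p)$ for every $x\in A$ and $yR+(pq)=(q)$ for every $y\in B$ (for instance, writing $x=ps$ with $\gcd(s,q)=1$ forces $(s,q)=R$, whence $(x,pq)=(p)$). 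From this the graph structure falls out: any two vertices in $A$ both lie in $(p)$, and similarly within $B$, so there are no edges inside either part; whereas for $x\in A$ and $y\in B$ one has $x\notin (q)=yR+(pq)$ and $y\notin (p)=xR+(pq)$, so every cross edge is present. This exhibits $\Gamma''_{(pq)}(R)$ as complete bipartite with parts $A$ and $B$, and the verification of the two ideal identities above is the only place where actual computation is required and hence the main (but still routine) obstacle.
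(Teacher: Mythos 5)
Your proposal is correct and follows essentially the same route as the paper: both reduce to the three possible radicals $(p)$, $(q)$, $(pq)$, dismiss the maximal cases as giving an empty vertex set, and exhibit the partition into multiples of $p$ (but not $q$) and multiples of $q$ (but not $p$) as the bipartition. The only difference is that you explicitly verify the identities $xR+(pq)=(p)$ and $yR+(pq)=(q)$, which the paper leaves as ``clearly''.
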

\begin{proof}
If $I=p^{\alpha_1}\Bbb Z_n$ or $I=q^{\beta_1}\Bbb Z_n$, where $\alpha_1\leq \alpha$ and $\beta_1\leq \beta$, then $\Gamma''_{\sqrt{I}(R)}=\emptyset$. Otherwise, if $I=p^{\alpha_1}q^{\beta_1}\Bbb Z_n$, $0\not=\alpha_1\leq \alpha$ and $0\not=\beta_1\leq \beta$, then $V(\Gamma''_{\sqrt{I}}(R))=V_1 \cup V_2$, where $V_1=p\Bbb Z_n\setminus (p \cap q)\Bbb Z_n$ and
$V_2=q\Bbb Z_n\setminus (p \cap q)\Bbb Z_n$. Clearly, $V_1$, $V_2$ are independent sets and any vertex in $V_1$ is adjacent to any arbitrary vertex in $V_2$. Therefore, $\Gamma''_{\sqrt{I}}(R)$ is a complete bipartite graph.
\end{proof}

 \begin{ex}\label{2.3}
Let $R=\Bbb Z_{24}$ and $I=12\Bbb Z_{24}$. Then in the Figures (D), (E), and (F)  we can see that the graph
$Q\Gamma''_I(R)$ is an induced subgraph of $\Gamma''_{\sqrt{I}}(R)$ and the graph $\Gamma''_{\sqrt{I}}(R)$ is an induced subgraph of $\Gamma''_I(R)$.
\begin{figure}[H]
\centering
\begin{subfigure}[h]{0.49\textwidth}
\centering
\caption{$\Gamma''_I(R)$.}
\begin{center}
\begin{tikzpicture}[auto,node distance=1.5cm,
  thick,main node/.style={circle,fill=black!10,font=\sffamily\tiny\bfseries}]
\node[main node] (1) {2};
\node[main node] (2) [right of=1] {10};
\node[main node] (3) [right of=2] {14};
\node[main node] (4) [right of=3] {22};
\node[main node] (5) [below of=1] {3};
\node[main node] (6) [right of=5] {9};
\node[main node] (7) [right of=6] {15};
\node[main node] (8) [right of=7] {21};
\node[main node] (9) [below of=5] {4};
\node[main node] (10) [right of=9] {8};
\node[main node] (11) [right of=10] {16};
\node[main node] (12) [right of=11] {20};
\node[main node] (13) [below of=10] {6};
\node[main node] (14) [right of=13] {18};
\path[every node/.style={font=\sffamily\small}]
    (1) edge node [left] {} (5)
    (1) edge node [left] {} (6)
        (1) edge node [left] {} (7)
         (1) edge node [left] {} (8)
          (2) edge node [left] {} (5)
           (2) edge node [left] {} (6)
            (2) edge node [left] {} (7)
             (2) edge node [left] {} (8)
              (3) edge node [left] {} (5)
               (3) edge node [left] {} (6)
                (3) edge node [left] {} (7)
                 (3) edge node [left] {} (8)
                  (4) edge node [left] {} (5)
                   (4) edge node [left] {} (6)
                    (4) edge node [left] {} (7)
                     (4) edge node [left] {} (8)
                      (9) edge node [left] {} (14)
                       (9) edge node [left] {} (5)
                        (9) edge node [left] {} (6)
                         (9) edge node [left] {} (7)
                          (9) edge node [left] {} (8)
                         (9) edge node [left] {} (13)
                         (10) edge node [left] {} (5)
       (10) edge node [left] {} (6)
        (10) edge node [left] {} (7)
         (10) edge node [left] {} (8)
          (11) edge node [left] {} (5)
           (11) edge node [left] {} (6)
            (11) edge node [left] {} (7)
             (11) edge node [left] {} (8)
              (12) edge node [left] {} (5)
               (12) edge node [left] {} (6)
                (12) edge node [left] {} (7)
                 (12) edge node [left] {} (8)
                  (13) edge node [left] {} (10)
                   (13) edge node [left] {} (11)
                    (13) edge node [left] {} (12)
            (14) edge node [left] {} (10)
                   (14) edge node [left] {} (11)
                    (14) edge node [left] {} (12);
\end{tikzpicture}
\end{center}
\end{subfigure}

\hfill
\begin{subfigure}[b]{0.49\textwidth}
\centering
\caption{$\Gamma''_{\sqrt{I}}(R)$.}
\begin{center}
\begin{tikzpicture}[auto,node distance=1.5cm,
  thick,main node/.style={circle,fill=black!10,font=\sffamily\tiny\bfseries}]
\node[main node] (1) {2};
\node[main node] (2) [right of=1] {10};
\node[main node] (3) [right of=2] {14};
\node[main node] (4) [right of=3] {22};
\node[main node] (5) [below of=1] {3};
\node[main node] (6) [right of=5] {9};
\node[main node] (7) [right of=6] {15};
\node[main node] (8) [right of=7] {21};
\node[main node] (9) [below of=5] {4};
\node[main node] (10) [right of=9] {8};
\node[main node] (11) [right of=10] {16};
\node[main node] (12) [right of=11] {20};
\path[every node/.style={font=\sffamily\small}]
    (1) edge node [left] {} (5)
    (1) edge node [left] {} (6)
        (1) edge node [left] {} (7)
         (1) edge node [left] {} (8)
          (2) edge node [left] {} (5)
           (2) edge node [left] {} (6)
            (2) edge node [left] {} (7)
             (2) edge node [left] {} (8)
              (3) edge node [left] {} (5)
               (3) edge node [left] {} (6)
                (3) edge node [left] {} (7)
                 (3) edge node [left] {} (8)
                  (4) edge node [left] {} (5)
                   (4) edge node [left] {} (6)
                    (4) edge node [left] {} (7)
                     (4) edge node [left] {} (8)
                                            (9) edge node [left] {} (5)
                        (9) edge node [left] {} (6)
                         (9) edge node [left] {} (7)
                          (9) edge node [left] {} (8)
                                               (10) edge node [left] {} (5)
       (10) edge node [left] {} (6)
        (10) edge node [left] {} (7)
         (10) edge node [left] {} (8)
          (11) edge node [left] {} (5)
           (11) edge node [left] {} (6)
            (11) edge node [left] {} (7)
             (11) edge node [left] {} (8)
              (12) edge node [left] {} (5)
               (12) edge node [left] {} (6)
                (12) edge node [left] {} (7)
                 (12) edge node [left] {} (8);
                  \end{tikzpicture}
\end{center}
\end{subfigure}
\hfill
\begin{subfigure}[b]{0.49\textwidth}
\centering
\caption{$Q\Gamma''_I(R)$.}
\begin{center}
\begin{tikzpicture}[auto,node distance=1.5cm,
  thick,main node/.style={circle,fill=black!10,font=\sffamily\tiny\bfseries}]
\node[main node] (1) {2};
\node[main node] (2) [right of=1] {10};
\node[main node] (3) [right of=2] {14};
\node[main node] (4) [right of=3] {22};
\node[main node] (5) [below of=2] {9};
\node[main node] (7) [right of=5] {15};
\node[main node] (8) [right of=7] {21};
\node[main node] (9) [below of=1] {3};
\path[every node/.style={font=\sffamily\small}]
    (1) edge node [left] {} (5)
        (1) edge node [left] {} (7)
         (1) edge node [left] {} (8)
          (2) edge node [left] {} (5)
                      (2) edge node [left] {} (7)
             (2) edge node [left] {} (8)
              (3) edge node [left] {} (5)
                               (3) edge node [left] {} (7)
                 (3) edge node [left] {} (8)
                  (4) edge node [left] {} (5)
                                     (4) edge node [left] {} (7)
                                                (9) edge node [left] {} (1)
           (9) edge node [left] {} (2)
           (9) edge node [left] {} (3)
           (9) edge node [left] {} (4)
                     (4) edge node [left] {} (8);
\end{tikzpicture}
\end{center}
\end{subfigure}
\end{figure}
 \end{ex}

Let $G$ be a graph. A non-empty subset $D$ of the
vertex set $V(G)$ is called a \textit{dominating set} if every vertex $V (G\setminus D)$ is adjacent to at least
one vertex of $D$. The \textit{domination number} $\gamma(G)$ is the minimum cardinality among the dominating sets of $G$.
\begin{thm}\label{2.7}
Let $n= p_1^{\alpha_1}p_2^{\alpha_2}\cdots p_k^{\alpha_k}$,
where $p_i’s$ are distinct primes and $k > 1$.
Consider the vertices $x_i = n/p_i^{\alpha_i}$ for $i = 1, 2, \ldots , k$ of the graph $\Gamma''_{n\Bbb Z}(\Bbb Z)\setminus \sqrt{n\Bbb Z}$. Then
$S = \{x_i : i = 1, 2, \ldots , k\}$ is a dominating set for $\Gamma''_{n\Bbb Z}(\Bbb Z)\setminus \sqrt{n\Bbb Z}$ and so
the domination number $\gamma$ of $\Gamma''_{n\Bbb Z}(\Bbb Z)\setminus \sqrt{n\Bbb Z}$ is $k$.
\end{thm}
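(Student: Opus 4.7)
The plan has three steps: verify that each $x_i$ belongs to the vertex set, show $S$ dominates every other vertex, and address the lower bound $\gamma \ge k$. The central computational simplification is that in $\Bbb Z$, since $x_i$ divides $n$, one has $x_i\Bbb Z + n\Bbb Z = x_i\Bbb Z$, and for any integer $y$ one has $y\Bbb Z + n\Bbb Z = \gcd(y,n)\Bbb Z$. Consequently, adjacency $y \sim x_i$ in the graph reduces to the purely arithmetic pair of conditions $x_i \nmid y$ and $\gcd(y,n) \nmid x_i$.

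For the vertex check, the hypothesis $k > 1$ forces $1 < x_i < n$, so $\gcd(x_i, n) = x_i > 1$ and $n \nmid x_i$, placing $x_i$ in $V(\Gamma''_{n\Bbb Z}(\Bbb Z))$. Writing $x_i = \prod_{j \neq i} p_j^{\alpha_j}$ shows $p_i \nmid x_i$, hence $x_i \notin p_1 p_2 \cdots p_k \Bbb Z = \sqrt{n\Bbb Z}$, so $x_i$ survives in the induced subgraph. For the domination step, take an arbitrary vertex $y \notin S$. Because $y$ is a vertex, $\gcd(y, n) > 1$, so some $p_i$ divides $y$; because $y \notin \sqrt{n\Bbb Z}$, some $p_j$ does not divide $y$, and necessarily $i \ne j$. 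With this choice of $i$, one has $p_i \mid \gcd(y,n)$ but $p_i \nmid x_i$, so $\gcd(y,n) \nmid x_i$; symmetrically $p_j^{\alpha_j} \mid x_i$ while $p_j \nmid y$, so $x_i \nmid y$. Hence $y \sim x_i$, and $S$ is a dominating set, yielding $\gamma \le k$.

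The main obstacle is the matching lower bound $\gamma \ge k$. Having $|S| = k$ together with $S$ dominating only delivers the upper bound; something more is required for equality. The natural strategy is to produce $k$ vertices whose closed neighborhoods cannot be jointly covered by fewer than $k$ vertices of the graph — for each index $i$, one looks for a witness $v_i$ whose only dominators carry an arithmetic signature tied specifically to $p_i$, in such a way that no single vertex can simultaneously dominate witnesses for two different indices. Locating such a disjoint system of witnesses, rather than merely verifying that $S$ dominates, is the delicate combinatorial step that would complete the equality $\gamma = k$.
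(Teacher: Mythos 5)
Your first two steps (vertex membership of the $x_i$ and the domination of $S$, hence $\gamma\le k$) are correct and are essentially the paper's own argument: the paper likewise picks, for an arbitrary vertex $x$, an index with $p_j\mid x$ and checks the two ideal-membership conditions, which reduce exactly to your arithmetic conditions $x_j\nmid x$ and $\gcd(x,n)\nmid x_j$. The genuine gap is the one you flag yourself: you never establish $\gamma\ge k$. The paper does attempt precisely the strategy you sketch, taking as witnesses $D=\{p_1^{\alpha_1},\dots,p_k^{\alpha_k}\}$ (perturbed by a fresh prime so that $D$ misses a putative dominating set $S'$ of size $k-1$) and claiming that no single $y_t\in S'$ can dominate two distinct witnesses. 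But the justification of that claim is wrong: it reads ``$p_i^{\alpha_i}\notin y_t\Bbb Z+n\Bbb Z$'' as ``$n/p_i^{\alpha_i}$ divides $y_t$,'' whereas $y_t\Bbb Z+n\Bbb Z=\gcd(y_t,n)\Bbb Z$, so the condition only says $\gcd(y_t,n)\nmid p_i^{\alpha_i}$, i.e.\ that some prime $p_m$ with $m\ne i$ divides $y_t$ --- far weaker. Indeed, for $k\ge 3$ the single vertex $y_t=p_m$ with $m\notin\{i,j\}$ dominates both $p_i^{\alpha_i}$ and $p_j^{\alpha_j}$, so the disjoint system of witnesses that you (and the paper) are looking for does not exist.

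Your caution is in fact vindicated: the asserted equality fails for $k\ge 3$. Since adjacency of $x$ and $y$ depends only on whether the divisors $\gcd(x,n)$ and $\gcd(y,n)$ of $n$ are incomparable under divisibility, the two-element set $\{p_1,\ p_2p_3\cdots p_k\}$ already dominates $\Gamma''_{n\Bbb Z}(\Bbb Z)\setminus\sqrt{n\Bbb Z}$: a vertex $x$ with $p_1\nmid x$ is adjacent to $p_1$ (because $\gcd(x,n)>1$ is prime to $p_1$, hence $\gcd(x,n)\nmid p_1$), while a vertex with $p_1\mid x$ satisfies $p_2\cdots p_k\nmid x$ (otherwise $x\in\sqrt{n\Bbb Z}$) and $\gcd(x,n)\nmid p_2\cdots p_k$, hence is adjacent to $p_2p_3\cdots p_k$. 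For $n=30$ this gives the dominating set $\{2,15\}$, so $\gamma=2<3=k$. Thus only the upper bound $\gamma\le k$ (your argument, and the paper's first paragraph) is salvageable; the lower bound you leave open cannot be supplied, and the paper's proof of it does not survive scrutiny.
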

\begin{proof}
Let $x$ be an arbitrary vertex in
$ \Gamma''_{n\Bbb Z}(\Bbb Z)\setminus \sqrt{n\Bbb Z}$. Then $\sqrt{n\Bbb Z}=p_1p_2\cdots p_k$ does not divide $x$ and there exists $j \in \{1, 2, \ldots k\}$ such that $p_j^{\alpha_t}$ ($\alpha_t \leq \alpha_j$) divide $x$. Observe that $x\not \in x_j\Bbb Z+n\Bbb Z$ and $x_j\not \in x\Bbb Z+n\Bbb Z$, i.e., $x$ is adjacent to $x_j$. Hence, $S$ is a dominating
set and hence  $\gamma \leq k$.
Now, assume contrary that $\gamma < k$. Then there exists a dominating set $S'$ with $k - 1$ vertices. Let
$S' =\{y_1, y_2, \ldots, y_{k-1}\}$. Consider the set of vertices $D =\{p_1^{\alpha_1},p_2^{\alpha_2},\ldots, p_k^{\alpha_k}\}$. If any
$p_i^{\alpha_i} \in S'$, then we replace $p_i^{\alpha_i}$ in $D$ by $pp_i^{\alpha_i}$, where $p$ is a prime which does not divide $n$
and $pp_i^{\alpha_i}\not \in  S'$. This can be guaranteed as choice of such a $p$ is infinite. Thus $D\cap  S'=\emptyset$.
Since  $S'$ is a dominating set, each element of $D$ is adjacent to some element of  $S'$. We
claim that two distinct elements of $p_i^{\alpha_i}$ and $p_j^{\alpha_j}$ of $D$ can not be dominated by same $y_t$.
Because, if it happens, then $p_i^{\alpha_i}\not \in y_t\Bbb Z+n\Bbb Z$ and $p_j^{\alpha_j}\not \in y_t\Bbb Z+n\Bbb Z$, i.e., both $n/p_i^{\alpha_i}$ and $n/p_j^{\alpha_j}$ divides $y_t$,
i.e., their l.c.m. divides $y_t$, i.e., $n|y_t$, i.e., $y_t \in n\Bbb Z$, a contradiction. Therefore, distinct
$p_i^{\alpha_i}$`s are dominated by distinct elements of $S'$ and hence $S'$ should contain at least $k$
vertices, a contradiction. Thus $\gamma= k$ and we are done.
\end{proof}

\begin{cor}\label{2.8}
Let $I = n\Bbb Z$ be an ideal of the ring $\Bbb Z$. Then we have the following.
\begin{itemize}
\item [(a)] If $n = 0$ or $n= p^k$, where $p$ is prime and $k$ is a positive integer, then $\Gamma''_{n\Bbb Z}(\Bbb Z)\setminus \sqrt{n\Bbb Z}$ is
an empty graph.
\item [(b)] If $n = p_1^{\alpha_1}p_2^{\alpha_2}$, where $p_1$, $p_2$ are distinct primes, then $\Gamma''_{n\Bbb Z}(\Bbb Z)\setminus \sqrt{n\Bbb Z}$ is a complete bipartite graph with diam($\Gamma''_{n\Bbb Z}(\Bbb Z)\setminus \sqrt{n\Bbb Z})=2$ and gr($\Gamma''_{n\Bbb Z}(\Bbb Z)\setminus \sqrt{n\Bbb Z})=4$.
\item [(c)]  $n= p_1^{\alpha_1}p_2^{\alpha_2}\cdots p_k^{\alpha_k}$,
where $p_i’s$ are distinct primes and $k >2$, then $\Gamma''_{n\Bbb Z}(\Bbb Z)\setminus \sqrt{n\Bbb Z}$ is
a $k$-partite graph with the domination number  $\gamma= k$.
\end{itemize}
\end{cor}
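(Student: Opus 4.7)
The plan is to dispatch (a), (b), (c) in order, using the reformulation that in $\mathbb{Z}$ we have $x\in y\mathbb{Z}+n\mathbb{Z}$ if and only if $\gcd(y,n)\mid x$; hence adjacency in $\Gamma''_{n\mathbb{Z}}(\mathbb{Z})\setminus\sqrt{n\mathbb{Z}}$ is the clean condition $\gcd(x,n)\nmid y$ and $\gcd(y,n)\nmid x$. Theorem~\ref{2.7} will already carry most of (c).

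For (a), when $n=p^k$ the radical $\sqrt{n\mathbb{Z}}=p\mathbb{Z}$ is maximal; any $x\notin p\mathbb{Z}$ satisfies $\gcd(x,n)=1$, so $x\mathbb{Z}+n\mathbb{Z}=\mathbb{Z}$ and $x$ fails the vertex condition. Hence the vertex set is empty. The case $n=0$ is treated in the same spirit, with $\sqrt{0\mathbb{Z}}=0$ forcing the vertex condition to degenerate under the conventions in use.

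For (b), with $n=p_1^{\alpha_1}p_2^{\alpha_2}$ and $\sqrt{n\mathbb{Z}}=p_1p_2\mathbb{Z}$, I would split the vertex set as $V_1=\{x:p_1\mid x,\,p_2\nmid x\}$ and $V_2=\{x:p_2\mid x,\,p_1\nmid x\}$. Inside each $V_i$ the two gcds $\gcd(x,n),\gcd(y,n)$ are powers of the same single prime, hence comparable under divisibility; the smaller one divides the other vertex, so the pair is non-adjacent. Across $V_1$ and $V_2$ the two gcds are coprime powers of distinct primes, so neither divides the other vertex and every cross pair is adjacent. This realizes the graph as a complete bipartite graph with both parts infinite; $\mathrm{diam}=2$ and $g=4$ then follow from standard facts about $K_{m,n}$ with $m,n\geq 2$.

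For (c), the equality $\gamma=k$ is exactly Theorem~\ref{2.7}. For the $k$-partite structure I would group vertices by an invariant of $\gcd(x,n)$---concretely, by a chain decomposition of the poset of proper divisors $d\mid n$ with $\mathrm{rad}(n)\nmid d$, coloring each vertex by its chain index---and check independence within each chain via the divisibility criterion. The main obstacle lies precisely here: the non-adjacency relation becomes the comparability relation on $\gcd$'s, so one must exhibit a $k$-chain cover of the relevant divisor subposet and then verify that each resulting class is independent in the full graph and not merely among $\gcd$-representatives.
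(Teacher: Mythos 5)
Your $\gcd$ reformulation ($x\in y\mathbb{Z}+n\mathbb{Z}$ iff $\gcd(y,n)\mid x$, equivalently $\gcd(y,n)\mid\gcd(x,n)$) is the right lens, and with it your treatment of (a) for $n=p^{k}$ and of (b) is correct and complete; it is essentially the argument the paper itself uses in Theorem~\ref{2.6} (the same $V_1,V_2$ split), and the paper supplies no separate proof of Corollary~\ref{2.8} beyond its reliance on Theorems~\ref{2.6} and~\ref{2.7}. However, the $n=0$ subcase of (a) is not ``the same spirit'': here $\sqrt{0\mathbb{Z}}=\{0\}$, so deleting it changes nothing and $\Gamma''_{0\mathbb{Z}}(\mathbb{Z})\setminus\sqrt{0\mathbb{Z}}$ is the full cozero-divisor graph $\Gamma'(\mathbb{Z})$, in which $2$ and $3$ are adjacent ($2\notin 3\mathbb{Z}$ and $3\notin 2\mathbb{Z}$); no vertex condition degenerates, and the hand-waved claim cannot be completed as stated.

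The step you yourself flag as ``the main obstacle'' in (c) is a genuine gap, and in fact it cannot be closed. As you observe, non-adjacency is exactly comparability of $\gcd(x,n)$ and $\gcd(y,n)$ in the divisibility order, so a partition into independent sets amounts to a chain cover of the poset $P=\{d : d\mid n,\ d>1,\ \mathrm{rad}(n)\nmid d\}$; by Dilworth's theorem the minimum number of chains equals the size of a maximum antichain of $P$, which exceeds $k$ in general. Concretely, for $n=210=2\cdot3\cdot5\cdot7$ the divisors $6,10,14,15,21,35$ are pairwise incomparable, hence those six vertices induce a $K_6$ in $\Gamma''_{210\mathbb{Z}}(\mathbb{Z})\setminus\sqrt{210\mathbb{Z}}$, which therefore admits no partition into $k=4$ independent parts; likewise $n=900=2^2 3^2 5^2$ gives a $K_6$ on $4,6,9,10,15,25$ with $k=3$. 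So only the domination-number assertion of (c), which you correctly delegate to Theorem~\ref{2.7}, survives; no choice of chain decomposition will produce the claimed $k$-partition, and the defect lies in the statement rather than in your strategy.
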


\begin{thm}\label{2.12}
Let $I$ be an ideal of $R$. Then we have the following.
 \begin{itemize}
   \item [(a)] The graph $\Gamma''_I(R)\setminus J_I(R)$ is connected.
   \item [(b)] If $\vert \mathcal{M}(I)\vert\not=1$, then diam $(\Gamma''_I(R)\setminus J_I(R)) \leq 2$.
 \end{itemize}
 \end{thm}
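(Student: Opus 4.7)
The plan is to prove (b) first and then deduce (a). The starting observation is that adjacency in $\Gamma''_I(R)$ is purely ideal-theoretic: since $x\in yR+I$ is equivalent to $xR+I\subseteq yR+I$, two distinct vertices $x,y$ are adjacent precisely when the ideals $xR+I$ and $yR+I$ are incomparable under inclusion. For any vertex $z$ of $\Gamma''_I(R)\setminus J_I(R)$, set $\mathcal{M}_z=\{\mathfrak{n}\in\mathcal{M}(I):z\in\mathfrak{n}\}$; this set is nonempty because $zR+I\ne R$ forces $zR+I$ to lie in some maximal ideal (which then automatically contains $I$), and it is a proper subset of $\mathcal{M}(I)$ because $z\notin J_I(R)$.

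For (b), fix distinct vertices $x,y$ of $\Gamma''_I(R)\setminus J_I(R)$. If $xR+I$ and $yR+I$ are incomparable, $d(x,y)=1$; otherwise, after relabelling, assume $xR+I\subseteq yR+I$, which yields $\mathcal{M}_y\subseteq\mathcal{M}_x$. Since $\mathcal{M}_x\subsetneq\mathcal{M}(I)$, there exists $\mathfrak{m}'\in\mathcal{M}(I)$ with $x,y\notin\mathfrak{m}'$. Pick also $\mathfrak{m}\in\mathcal{M}(I)$ with $yR+I\subseteq\mathfrak{m}$, so $x,y\in\mathfrak{m}\ne\mathfrak{m}'$, and choose $a\in\mathfrak{m}'\setminus\mathfrak{m}$ (nonempty since $\mathfrak{m},\mathfrak{m}'$ are distinct maximal ideals). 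Then I would verify routinely that $a$ is a vertex of $\Gamma''_I(R)\setminus J_I(R)$ distinct from $x$ and $y$ (each requirement following from $a\in\mathfrak{m}'$, $a\notin\mathfrak{m}$, and $I\subseteq\mathfrak{m}\cap\mathfrak{m}'$), and that $a$ is a common neighbor of $x$ and $y$: $a\in xR+I\subseteq\mathfrak{m}$ would contradict $a\notin\mathfrak{m}$, while $x\in aR+I\subseteq\mathfrak{m}'$ would contradict $x\notin\mathfrak{m}'$, and the same two lines work for $y$. Thus $x-a-y$ is a path of length $2$ and $\mathrm{diam}(\Gamma''_I(R)\setminus J_I(R))\le 2$.

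For (a), the case $|\mathcal{M}(I)|\ge 2$ is immediate from (b). If $|\mathcal{M}(I)|=1$, say $\mathcal{M}(I)=\{\mathfrak{m}\}$, then $J_I(R)=\mathfrak{m}$ and the vertex condition $xR+I\ne R$ forces $x\in\mathfrak{m}$, so $\Gamma''_I(R)\setminus J_I(R)$ has no vertices and is trivially connected. The one place to be careful is the production of $\mathfrak{m}'$ avoiding both $x$ and $y$; once the inclusion $\mathcal{M}_y\subseteq\mathcal{M}_x$ derived from $xR+I\subseteq yR+I$ is recorded, it drops out of $x\notin J_I(R)$, and the remaining checks are routine ideal arithmetic.
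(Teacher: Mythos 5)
Your proof is correct. It follows the same overall strategy as the paper's --- when $x$ and $y$ are non-adjacent, i.e.\ (after relabelling) $xR+I\subseteq yR+I$, produce a common neighbour lying in a maximal ideal of $\mathcal{M}(I)$, using that $x\notin J_I(R)$ --- but the mechanism for producing that neighbour is different and somewhat cleaner. The paper fixes $\mathfrak{m}\in\mathcal{M}(I)$ with $x\notin\mathfrak{m}$ and argues via an ideal-avoidance step (an ideal contained in the union of two ideals must lie in one of them) that $\mathfrak{m}\not\subseteq J_I(R)\cup(yR+I)$, then takes the intermediate vertex $z$ inside $\mathfrak{m}$ but outside that union. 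You instead take a second maximal ideal $\mathfrak{m}\supseteq yR+I$ and choose $a\in\mathfrak{m}'\setminus\mathfrak{m}$, which is nonempty simply because distinct maximal ideals are incomparable; this avoids the union lemma entirely, and each adjacency and vertex check reduces to a one-line containment. The small extra idea that makes your route work is recording that $xR+I\subseteq yR+I$ forces $\mathcal{M}_y\subseteq\mathcal{M}_x$, so the maximal ideal $\mathfrak{m}'$ witnessing $x\notin J_I(R)$ automatically misses $y$ as well. The degenerate case $\vert\mathcal{M}(I)\vert=1$ is handled identically in both arguments.
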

\begin{proof}
(a) If  $\vert \mathcal{M}(I)\vert=1$, then $V(\Gamma''_I(R)) \setminus J_I(R)$ is the empty set; which is connected. So we may assume that $\vert \mathcal{M}(I)\vert > 1$. Let $x,y \in V(\Gamma''_I(R)) \setminus J_I(R)$ be two distinct elements. Without loss of generality, we may assume that $x \in yR+I$. Since $x \not\in J_I(R)$, there exists $\mathfrak{m}\in \mathcal{M}(I)$ such that $x \not\in \mathfrak{m}$. We claim that $\mathfrak{m} \nsubseteq J_I(R)\cup (yR+I)$. Otherwise, $\mathfrak{m} \subseteq J_I(R) \cup (yR+I)$. This implies that $\mathfrak{m} \subseteq J_I(R)$ or $\mathfrak{m} \subseteq yR+I$. But $\mathfrak{m} \neq J_I(R)$. Hence we have $\mathfrak{m}  \subseteq yR+I \varsubsetneq R$, so $\mathfrak{m} =yR+I$. Thus $x \in \mathfrak{m}$, a contradiction. Choose the element $z \in \mathfrak{m}\setminus J_I(R) \cup (yR+I)$. If $y \in zR+I$, then $yR+I \subseteq zR+I$. Thus $x \in yR+I\subseteq zR+I\subseteq \mathfrak{m}  +I=\mathfrak{m}$. This contradiction shows that $z-y$. Now as $xR+I\subseteq yR+I$ and $zR+I\subseteq\mathfrak{m}+I=\mathfrak{m}$ we have $z \not \in xR+I$ and $x \not\in zR+I$. Therefore, $x-z-y$.

(b) This follows from part (a).
\end{proof}
The\textit{chromatic number} of a graph $G$, denoted by $\chi(G)$, is the minimal number of colors which
can be assigned to the vertices of $G$ in such a way that every two adjacent vertices have
different colors. Also, a \textit{clique of a graph} is a complete subgraph and the number of vertices
in a largest clique of $G$, denoted by $\omega(G)$, is called the \textit{clique number} of $G$.
A graph $H$ is called a \textit{retract} of $G$ if there are homomorphisms $\rho: G \rightarrow H$ and
$\varphi : H \rightarrow G$ such that $\rho \circ \varphi= id_H$. The homomorphism $\rho$ is called a \textit{retraction} \cite{HT97}.
If $H$ is a retract of $G$, then chromatic number and clique number of $G$ and $H$ are same \cite[Observation 2.17]{HT97}.
\begin{thm}\label{2.9}
Let $I$ be an ideal of $R$. Then $\Gamma'(R/I)$  is a retract of $\Gamma''_I(R)$.
\end{thm}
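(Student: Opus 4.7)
The plan is to exhibit explicit graph homomorphisms $\rho : \Gamma''_I(R) \to \Gamma'(R/I)$ and $\varphi : \Gamma'(R/I) \to \Gamma''_I(R)$ with $\rho \circ \varphi = \mathrm{id}_{\Gamma'(R/I)}$. The natural candidate for $\rho$ is the reduction map $\rho(x) = x + I$. For $\varphi$ I would fix, once and for all, a set-theoretic section of the quotient $R \to R/I$ on the relevant vertex sets: for each vertex $\bar{a}$ of $\Gamma'(R/I)$, pick a representative $a \in R$ and set $\varphi(\bar{a}) = a$.

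The first thing to check is that $\rho$ is well-defined on vertices. If $x \in R \setminus I$ with $xR + I \neq R$, then $\bar{x} \neq \bar{0}$ and $\bar{x}(R/I) \neq R/I$, so $\bar{x}$ is a non-zero non-unit of $R/I$, i.e.\ a vertex of $\Gamma'(R/I)$. The key observation for edge-preservation is that, via the standard correspondence between ideals of $R/I$ and ideals of $R$ containing $I$, the condition $x \notin yR + I$ is equivalent to $\bar{x} \notin \bar{y}(R/I)$; hence adjacency in $\Gamma''_I(R)$ maps onto adjacency in $\Gamma'(R/I)$.

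For $\varphi$, each representative $a$ of a non-zero non-unit class $\bar{a}$ satisfies $a \in R \setminus I$ and $aR + I \neq R$, so $a$ is a legitimate vertex of $\Gamma''_I(R)$; and the containment conditions defining adjacency lift in the opposite direction by the same correspondence, so $\varphi$ is also a graph homomorphism. The composition $\rho \circ \varphi$ sends $\bar{a}$ to $\bar{a}$ by construction, giving $\rho \circ \varphi = \mathrm{id}_{\Gamma'(R/I)}$.

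The only subtlety worth pausing over is confirming that $\rho$ does not collapse an edge to a loop: if $x$ and $y$ are adjacent in $\Gamma''_I(R)$ but $\bar{x} = \bar{y}$, then $y - x \in I$, and therefore $y = x \cdot 1 + (y - x) \in xR + I$, contradicting the adjacency. This brief remark is the only step where the definition of adjacency enters nontrivially; everything else is a direct translation through the quotient. I expect no real obstacle beyond this.
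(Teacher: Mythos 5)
Your proposal is correct and follows essentially the same route as the paper: the reduction map $\rho(x)=x+I$ and a fixed choice of representatives $\varphi(\bar a)=a$, with adjacency transferred through the equivalence $x\in yR+I \Leftrightarrow \bar x\in\bar y(R/I)$. The only difference is that you spell out the verifications (including that an edge cannot collapse to a loop) which the paper leaves as ``easy to see.''
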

\begin{proof}
Define the map $\rho: V (\Gamma''_I(R)) \rightarrow V (\Gamma'(R/I))$ by $\rho(x) = x + I$. For
each coset $x + I \in  V (\Gamma'(R/I)))$, choose and fix a representative $x^* \in x + I$ and define
$\varphi : V (\Gamma'(R/I)) \rightarrow V (\Gamma''_I(R))$ by $\varphi(x + I) = x^*$. Then it is easy to see that $\rho$ is a surjective graph homomorphism and $\varphi$ is a graph homomorphism.
Moreover, $\rho \circ \varphi : V (\Gamma'(R/I)) \rightarrow V (\Gamma''_I(R))$ is given by $(\rho \circ \varphi)(x + I) = \rho(x^*) =
x^* + I = x + I$. Thus $\rho \circ \varphi=id_{\Gamma'(R/I)}$ and so $\Gamma'(R/I)$ is a retract
of $\Gamma''_I(R)$.
\end{proof}

\begin{cor}\label{2.10}
Let $I$ be an ideal of $R$. Then the graphs $\Gamma'(R/I)$ and $\Gamma''_I(R)$ have same chromatic number and clique number.
\end{cor}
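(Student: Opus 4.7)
The plan is to combine Theorem~\ref{2.9}, which already establishes that $\Gamma'(R/I)$ is a retract of $\Gamma''_I(R)$ via the explicit pair of graph homomorphisms $(\rho,\varphi)$, with the general fact recalled in the paragraph immediately preceding the statement (\cite[Observation 2.17]{HT97}): a retract of a graph inherits both its chromatic number and its clique number. Applying this observation to the retraction produced in Theorem~\ref{2.9} delivers $\chi(\Gamma'(R/I))=\chi(\Gamma''_I(R))$ and $\omega(\Gamma'(R/I))=\omega(\Gamma''_I(R))$ in one line, and the proof reduces to a single citation following a single reference.

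If a self-contained argument were preferred, I would verify both equalities directly from the formulas for $\rho$ and $\varphi$ given in Theorem~\ref{2.9}. For the chromatic number, any proper $k$-coloring $c$ of $\Gamma''_I(R)$ yields the proper $k$-coloring $c\circ \varphi$ of $\Gamma'(R/I)$, because $\varphi$ sends adjacent vertices to adjacent vertices, which then must carry distinct colors under $c$; pulling a proper $k$-coloring of $\Gamma'(R/I)$ back along $\rho$ gives the reverse inequality, using that $\rho$ is also a graph homomorphism. For the clique number, a clique $K$ in $\Gamma''_I(R)$ is sent by $\rho$ to a clique of the same cardinality in $\Gamma'(R/I)$ (distinct adjacent vertices map to distinct adjacent vertices in the loopless graph), and the symmetric statement holds with $\varphi$ in place of $\rho$; the retraction identity $\rho\circ\varphi=\mathrm{id}$ forces both maps to be injective on the cliques they transport, so the two clique numbers agree.

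The main obstacle is essentially nonexistent here: Theorem~\ref{2.9} has already done the substantive work of constructing the retraction, and the corollary is a standard consequence of the retract property. The only minor technical point to double-check, if one spells out the argument, is that graph homomorphisms between simple loopless graphs preserve the size of cliques and transport proper colorings faithfully, which is routine and follows immediately from the definition of a graph homomorphism.
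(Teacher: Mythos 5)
Your proposal is correct and matches the paper exactly: the corollary is stated as an immediate consequence of Theorem~\ref{2.9} together with the fact, already quoted from \cite[Observation 2.17]{HT97}, that a retract has the same chromatic number and clique number as the ambient graph. Your optional self-contained verification via transporting colorings and cliques along $\rho$ and $\varphi$ is sound but not needed.
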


\begin{thm}\label{2.11}
Let $I$ be an ideal of $R$ and $a, b \in R\setminus \sqrt{I}$ such that $a+I=a+\sqrt{I}$ and $b+I=b+\sqrt{I}$. Then the following
statements hold.
\begin{itemize}
\item [(a)] If $a + I$ is adjacent to $b + I$ in $\Gamma'(R/I)$, then $a$ is adjacent to $b$ in $Q\Gamma''_I(R)$.
\item [(b)] If $a$ is adjacent to $b$ in $Q\Gamma''_I(R)$, then $a+\sqrt{I}$ and $b+\sqrt{I}$ are always distinct elements,
and also they are adjacent in $\Gamma'(R/\sqrt{I})$. Moreover, $Q\Gamma''_I(R)$ is isomorphic to a subgraph of $\Gamma'(R/\sqrt{I})$.
\end{itemize}
\end{thm}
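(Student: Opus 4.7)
The overall plan is to unfold each of the three cozero-divisor adjacency conditions in terms of elements of $R$ and then to pass between $I$ and $\sqrt{I}$ using the vertex-defining equalities $aR+I=aR+\sqrt{I}$ and $bR+I=bR+\sqrt{I}$, together with Remark \ref{66.1}(a), which says that $xR+I\neq R$ and $xR+\sqrt{I}\neq R$ are equivalent. For part (a), the argument is then essentially a direct translation: assuming $a+I$ is adjacent to $b+I$ in $\Gamma'(R/I)$ means $a+I$ and $b+I$ are distinct non-zero non-units of $R/I$ with $a\notin bR+I$ and $b\notin aR+I$. The hypotheses give $a,b\in R\setminus\sqrt{I}$ with $aR+I=aR+\sqrt{I}$ and the analogous equality for $b$, and the non-unit conditions from $\Gamma'(R/I)$ supply $aR+I\neq R$ and $bR+I\neq R$, so both $a$ and $b$ qualify as vertices of $Q\Gamma''_I(R)$; the same element-wise non-containments, with $a\neq b$ forced by $a+I\neq b+I$, give the desired edge in $Q\Gamma''_I(R)$.

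For part (b), I would proceed in three short steps after assuming $a$ is adjacent to $b$ in $Q\Gamma''_I(R)$. First, to show $a+\sqrt{I}\neq b+\sqrt{I}$, argue by contradiction: if $a-b\in\sqrt{I}$ then $a\in b+\sqrt{I}\subseteq bR+\sqrt{I}=bR+I$ (by the vertex hypothesis on $b$), contradicting $a\notin bR+I$. Second, to show that these distinct cosets are adjacent in $\Gamma'(R/\sqrt{I})$, I would verify the vertex conditions: $a\notin\sqrt{I}$ gives $a+\sqrt{I}\neq 0$, and $aR+I\neq R$ combined with Remark \ref{66.1}(a) gives $aR+\sqrt{I}\neq R$ (symmetrically for $b$); then use the vertex-defining equalities to rewrite the $Q\Gamma''_I(R)$-adjacency conditions $a\notin bR+I$ and $b\notin aR+I$ as $a\notin bR+\sqrt{I}$ and $b\notin aR+\sqrt{I}$, which are exactly the adjacency conditions of $\Gamma'(R/\sqrt{I})$.

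For the ``moreover'' clause, I would consider the natural map $\phi\colon V(Q\Gamma''_I(R))\to V(\Gamma'(R/\sqrt{I}))$ given by $\phi(x)=x+\sqrt{I}$; the previous two steps show that $\phi$ is well-defined on vertices and sends edges to edges. The main subtle point, which I expect to be the only nontrivial aspect of the proof, is the injectivity needed to genuinely embed $Q\Gamma''_I(R)$ as a subgraph of $\Gamma'(R/\sqrt{I})$: the calculation from the first step of part (b) actually shows that any two distinct vertices $a\neq b$ with $\phi(a)=\phi(b)$ satisfy $a\in bR+I$ and are therefore non-adjacent in $Q\Gamma''_I(R)$, so adjacency is faithfully transported through $\phi$. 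The plan is to use this observation to identify $Q\Gamma''_I(R)$ with the subgraph of $\Gamma'(R/\sqrt{I})$ induced on $\phi(V(Q\Gamma''_I(R)))$, which realizes $Q\Gamma''_I(R)$ as a subgraph of $\Gamma'(R/\sqrt{I})$ in the sense asserted.
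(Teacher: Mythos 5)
Your handling of part (a) and of the two displayed assertions in part (b) is correct and follows essentially the same route as the paper; indeed you are more careful than the paper at the one delicate point, namely using $bR+I=bR+\sqrt{I}$ to convert $a\notin bR+I$ into $a\notin bR+\sqrt{I}$ before claiming adjacency in $\Gamma'(R/\sqrt{I})$ (the paper only verifies non-membership modulo $I$ and then jumps to the conclusion modulo $\sqrt{I}$). You also read the hypothesis ``$a+I=a+\sqrt{I}$'' as the ideal equality $aR+I=aR+\sqrt{I}$, which is clearly what is intended, since the literal coset equality would force $I=\sqrt{I}$.

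The genuine gap is in the ``moreover'' clause, and you have correctly located it but not closed it. Knowing that every fibre of $\phi(x)=x+\sqrt{I}$ is an independent set does not let you ``identify'' $Q\Gamma''_I(R)$ with the induced subgraph on $\phi(V)$: when $\phi$ is not injective, that identification is a proper quotient of the graph, not an isomorphism onto a subgraph. And $\phi$ does fail to be injective: take $R=\Bbb Z_{12}$ and $I=0$, the paper's own Example \ref{2.2}. There $\sqrt{I}=\{0,6\}$, the vertex set of $Q\Gamma''_I(R)$ is $\{2,3,9,10\}$ and the graph is the $4$-cycle $2-3-10-9-2$, while $3+\sqrt{I}=9+\sqrt{I}$ and $\Gamma'(R/\sqrt{I})\cong\Gamma'(\Bbb Z_6)$ is the path $\bar{2}-\bar{3}-\bar{4}$ on three vertices; a $4$-cycle is not isomorphic to any subgraph of a graph with three vertices, so the asserted embedding does not exist and no argument can supply it. For what it is worth, the paper's own proof of this clause has the same defect: it defines a graph $G$ on coset representatives and declares it a subgraph of $\Gamma'(R/\sqrt{I})$ without addressing the collapse of distinct vertices onto a common coset. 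What your argument (and the paper's) actually establishes is weaker and should be stated as such: $\phi$ is a graph homomorphism from $Q\Gamma''_I(R)$ to $\Gamma'(R/\sqrt{I})$ whose fibres are independent sets, and it restricts to an isomorphism onto an induced subgraph precisely when it is injective on vertices.
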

\begin{proof}
(a) Let $a+I$ is adjacent to $b+I$ in $\Gamma'(R/I)$. Then $a+I\not \in (b+I)(R/I)$ and $b+I\not \in (a+I)(R/I)$. Thus  $a\not \in (b+I)R+I$ and $b\not \in (a+I)(R)+I$. This implies that
$a\not \in bR+I=(b+0)R+I\subseteq (b+I)R+I$ and $b\not \in aR+I=(a+0)R+I\subseteq (a+I)R+I$. By assumption, $a, b \in R\setminus \sqrt{I}$, $a+I=a+\sqrt{I}$, and $b+I=b+\sqrt{I}$. Thus $a$ is adjacent to $b$ in $Q\Gamma''_I(R)$.

(b) Suppose that $a-b$ in $Q\Gamma''_I(R)$. Assume on the contrary that $a+\sqrt{I}=b+\sqrt{I}$. Then $a+I=b+I$ since by assumption, $a+I=a+\sqrt{I}$ and $b+I=b+\sqrt{I}$. This implies that $a  \in bR+I$. This contradiction shows that $a+\sqrt{I}\not=b+\sqrt{I}$.
Now, if $a+I \in (b+I)(R/I)$, then $a\in (b+I)R+I\subseteq bR+I$, a contradiction. Thus  $a+I \not \in (b+I)(R/I)$. Similarly,
$b+I\not \in (a+I)(R/I)$ and so $a+\sqrt{I}$ and $b+\sqrt{I}$ are adjacent in $\Gamma'(R/\sqrt{I})$.
Now, we show that $Q\Gamma''_I(R)$
is isomorphic to a subgraph of $\Gamma'(R/\sqrt{I})$. We define a graph $G$ with vertices $\{a_i : a_i +\sqrt{I} \ is \ a  \ vertex \ of  \  \Gamma'(R/\sqrt{I})\}$, where $a_i - a_j$ if  $a_i \not \in a_jR+I$ and $a_j \not \in a_iR+I$. Then $G$ is a subgraph of
$\Gamma'(R/\sqrt{I})$.
\end{proof}

\begin{defn}\label{2.13}
We say that an ideal $I$ of $R$ is a \textit{sum-radical ideal} if $\sqrt{I}=I+Rx$ for some $x \in \sqrt{I}$.
\end{defn}

Recall that a \textit{radical ideal} of $R$ is an ideal that is equal to its radical.
Clearly, every radical ideal is a sum-radical ideal. But as we can see in the following example the converse is not true in general.
\begin{ex}\label{2.14}
Consider the ideal $4\Bbb Z$ of the ring $\Bbb Z$. Then $4\Bbb Z$ is not a radical ideal. But $2\in \sqrt{4\Bbb Z}=2\Bbb Z$ and
$2\Bbb Z+4\Bbb Z=2\Bbb Z$ implies that $4\Bbb Z$  is a sum-radical ideal of $\Bbb Z$. Similarly, one can see that every ideal of the ring $\Bbb Z$ is a sum-radical ideal.
\end{ex}

\begin{no}\label{nnn}
For an ideal $I$ of $R$, we set $T_I:=\{x \in R\, |\, Rx+I\not=Rx+\sqrt{I}\}$.
\end{no}

\begin{lem}\label{2.15}
Let $I$ be an ideal of $R$. Then we have the following.
\begin{itemize}
\item [(a)]  $V(Q\Gamma_I(R)) \cap T_I\subseteq V(Q\Gamma''_I(R))$.
\item [(b)] If $I$ is a sum-radical ideal of $R$, then $\mathfrak{m} \not \subseteq T_I$ for each $\mathfrak{m}\in \mathcal{M}(I)$.
\end{itemize}
\end{lem}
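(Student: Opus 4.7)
The plan is to split the two parts. Part (a) is a membership verification: take an arbitrary element of $V(Q\Gamma_I(R))\cap T_I$ and check the three defining conditions of $V(Q\Gamma''_I(R))$. Part (b) is an explicit construction: use the sum-radical hypothesis to exhibit an element of $\mathfrak{m}$ that escapes $T_I$.

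For part (a), I fix $x \in V(Q\Gamma_I(R))\cap T_I$, so $x \in R\setminus\sqrt{I}$, there is some $y \in R\setminus\sqrt{I}$ with $xy \in I$, and $Rx+I \neq Rx+\sqrt{I}$. The first defining condition $x \notin \sqrt{I}$ is inherited immediately from $V(Q\Gamma_I(R))$. For the non-unit condition $xR+I \neq R$, I argue by contradiction: if $xR+I=R$, then $1=xr+i$ for some $r \in R$ and $i \in I$, and multiplying through by $y$ yields $y = r(xy)+iy \in I\subseteq\sqrt{I}$, contradicting $y \notin \sqrt{I}$.

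The remaining equality $xR+\sqrt{I}=xR+I$ is the delicate step, and I expect this to be the main obstacle. The inclusion $xR+I \subseteq xR+\sqrt{I}$ is automatic, so what must be established is that every element of $\sqrt{I}$ is already expressible as a multiple of $x$ plus an element of $I$. The only tools available are the zero-divisor type relation $xy\in I$ with $y \notin \sqrt{I}$ and the strict inequality recorded by $x \in T_I$; the careful interplay of these two hypotheses is exactly where the argument must live, and deriving the required equality from them is where I would concentrate the effort.

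For part (b), the sum-radical hypothesis supplies $x_{0} \in \sqrt{I}$ with $\sqrt{I}=I+Rx_{0}$. Any maximal ideal $\mathfrak{m}\in \mathcal{M}(I)$ is radical and contains $I$, hence contains $\sqrt{I}$, so in particular $x_{0} \in \mathfrak{m}$. A one-line calculation gives $Rx_{0}+\sqrt{I} = Rx_{0}+(I+Rx_{0}) = Rx_{0}+I$, showing $x_{0} \notin T_I$. Combined with $x_{0} \in \mathfrak{m}$, this immediately yields $\mathfrak{m}\not\subseteq T_I$, as desired.
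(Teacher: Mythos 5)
Your part (b) is correct and is essentially the paper's own argument: take $x_0\in\sqrt{I}$ with $\sqrt{I}=I+Rx_0$, note $x_0\in\sqrt{I}\subseteq\mathfrak{m}$ for every $\mathfrak{m}\in\mathcal{M}(I)$, and compute $Rx_0+\sqrt{I}=Rx_0+I$ to get $x_0\notin T_I$. Nothing to add there.

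Part (a), however, contains a genuine gap, and the way you describe the ``delicate step'' shows you missed what is actually going on. By Notation \ref{nnn}, $x\in T_I$ means precisely $Rx+I\neq Rx+\sqrt{I}$; the third membership condition for $V(Q\Gamma''_I(R))$ is precisely $Rx+I=Rx+\sqrt{I}$. So the equality you propose to ``concentrate the effort'' on is the literal negation of one of your standing hypotheses and can never be derived: no amount of interplay with the relation $xy\in I$ will produce it. As literally printed, part (a) could only hold if $V(Q\Gamma_I(R))\cap T_I$ were empty, and it is not: for $R=\Bbb Z$, $I=12\Bbb Z$, the element $x=4$ lies in $V(Q\Gamma_I(R))$ (take $y=3$) and in $T_I$ (since $4\Bbb Z+12\Bbb Z=4\Bbb Z\neq 2\Bbb Z=4\Bbb Z+6\Bbb Z$), yet $4\notin V(Q\Gamma''_I(R))$. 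The statement is evidently a typo for $V(Q\Gamma_I(R))\setminus T_I\subseteq V(Q\Gamma''_I(R))$ (compare Proposition \ref{2.4}(d), which uses $\setminus T_I$, and the paper's one-line ``straightforward'' proof). Under that corrected reading your argument goes through immediately: the condition $x\notin\sqrt{I}$ is inherited, your unit computation (multiplying $1=xr+i$ by $y$ to force $y\in I\subseteq\sqrt{I}$) gives $xR+I\neq R$, and $x\notin T_I$ is by definition the equality $xR+I=xR+\sqrt{I}$. You should have flagged the contradiction between hypothesis and goal rather than presenting it as a hard but presumably tractable step.
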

\begin{proof}
(a) This is straightforward.

(b) Let $I$ be a sum-radical ideal of $R$. Then we have $\sqrt{I}=I+Rx$ for some $x \in \sqrt{I}$. Thus $\sqrt{I}+Rx=I+Rx$ and so $x \not \in T_I$.
Assume contrary that  $\mathfrak{m} \subseteq T_I$ for some $\mathfrak{m}\in \mathcal{M}(I)$. Then
$I\subseteq \mathfrak{m} \subseteq T_I$. It follows that $\sqrt{I}\subseteq \mathfrak{m} \subseteq T_I$ and so $x \in T_I$, a contradiction.
\end{proof}

Similar to the Theorem \ref{2.12} it is natural to ask the following
question:
\begin{co}\label{qq}
Let $I$ be an ideal of $R$. Is the graph $Q\Gamma''_I(R)\setminus J_I(R)$ connected?
\end{co}

The following theorem gives an affirmative answer to the Question \ref{qq}, when $I$ is a sum-radical ideal of $R$.
\begin{thm}\label{2.16}
Let $I$ be a sum-radical ideal of $R$. Then we have the following.
 \begin{itemize}
   \item [(a)] The graph $Q\Gamma''_I(R)\setminus J_I(R)$ is connected.
   \item [(b)] If $\vert \mathcal{M}(I)\vert\not=1$, then diam $(Q\Gamma''_I(R)\setminus J_I(R)) \leq 2$.
 \end{itemize}
 \end{thm}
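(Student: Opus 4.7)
The plan is to follow the proof of Theorem~\ref{2.12}, with the extra work of steering the intermediate vertex into $V(Q\Gamma''_I(R))$ rather than merely $V(\Gamma''_I(R))$. Part (a) reduces to part (b) once the trivial case $|\mathcal{M}(I)|=1$ is handled: if $\mathcal{M}(I)=\{\mathfrak{m}\}$, then every vertex of $Q\Gamma''_I(R)$ is a non-unit whose principal ideal plus $I$ is proper, hence lies in $\mathfrak{m}=J_I(R)$, making $V(Q\Gamma''_I(R))\setminus J_I(R)$ empty and the graph vacuously connected.

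For part (b), take distinct vertices $x,y\in V(Q\Gamma''_I(R))\setminus J_I(R)$ and assume they are not adjacent in $Q\Gamma''_I(R)$; by symmetry $x\in yR+I$, so $xR+I\subseteq yR+I$. Since $x\notin J_I(R)$, choose $\mathfrak{m}\in\mathcal{M}(I)$ with $x\notin\mathfrak{m}$. The prime-avoidance step from Theorem~\ref{2.12} gives $\mathfrak{m}\not\subseteq J_I(R)\cup(yR+I)$, using $J_I(R)\subsetneq\mathfrak{m}$ from $|\mathcal{M}(I)|>1$ and $\mathfrak{m}\neq yR+I$ from $x\in yR+I\setminus\mathfrak{m}$. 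The sum-radicality of $I$ enters via Lemma~\ref{2.15}(b) to supply $\mathfrak{m}\not\subseteq T_I$; writing $\sqrt{I}=I+Rx_0$, one even has $x_0\in\mathfrak{m}\setminus T_I$.

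The decisive combinatorial step is to merge these into $\mathfrak{m}\setminus\bigl(J_I(R)\cup(yR+I)\cup T_I\bigr)\neq\emptyset$. A useful observation is that $x_0\in yR+I$: in $R'=R/I$, $x\in yR+I$ gives $\bar{x}\in\bar{y}R'$, while $x\notin T_I$ gives $R'\bar{x}\supseteq R'\bar{x}_0$, so $\bar{x}_0\in\bar{y}R'$. Thus $x_0$ lies in $J_I(R)\cap(yR+I)\cap\mathfrak{m}$ yet outside $T_I$. Beginning with any $z_0\in\mathfrak{m}\setminus(J_I(R)\cup(yR+I))$ produced by prime avoidance, I would consider the perturbation $z=z_0+rx_0$: for every $r\in R$ the inclusion $rx_0\in J_I(R)\cap(yR+I)$ keeps $z$ in $\mathfrak{m}\setminus(J_I(R)\cup(yR+I))$, so only the condition $z\notin T_I$ remains to be secured. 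The principal structure of the nilradical $\sqrt{I}/I=R'\bar{x}_0$ afforded by sum-radicality is then used to select $r$ so that $x_0\in R(z_0+rx_0)+I$, i.e.\ $z\notin T_I$.

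Once such $z$ is found, verification is formal: $z\in\mathfrak{m}$ gives $zR+I\subseteq\mathfrak{m}\neq R$; $z\notin J_I(R)\supseteq\sqrt{I}$ gives $z\notin\sqrt{I}$; and $z\notin T_I$ gives $zR+I=zR+\sqrt{I}$, so $z\in V(Q\Gamma''_I(R))\setminus J_I(R)$. The adjacencies $x-z$ and $z-y$ in $Q\Gamma''_I(R)$ then follow exactly as in Theorem~\ref{2.12}: $z\notin yR+I\supseteq xR+I$ yields $z\notin xR+I$; $zR+I\subseteq\mathfrak{m}$ together with $x\notin\mathfrak{m}$ gives $x\notin zR+I$; and $y\in zR+I$ would force $x\in yR+I\subseteq zR+I\subseteq\mathfrak{m}$, contradicting $x\notin\mathfrak{m}$. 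Hence $d(x,y)\leq 2$, proving (b) and therefore (a). The main obstacle I expect is the perturbation step in the third paragraph: because $T_I$ is not an ideal, standard prime avoidance does not reach it, and the principal-nilradical structure supplied by sum-radicality must be leveraged to pick the correct $r$.
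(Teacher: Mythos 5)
Your overall skeleton matches the paper's: dispose of the case $|\mathcal{M}(I)|=1$, then for non-adjacent vertices $x,y$ with $x\in yR+I$ choose $\mathfrak{m}\in\mathcal{M}(I)$ with $x\notin\mathfrak{m}$ and produce $z\in\mathfrak{m}\setminus\bigl(J_I(R)\cup(yR+I)\cup T_I\bigr)$; the final verification that $x-z-y$ is the same as the paper's. Where the paper simply asserts that $\mathfrak{m}\subseteq J_I(R)\cup(yR+\sqrt{I})\cup T_I$ would force $\mathfrak{m}$ into one of the three sets and then invokes Lemma \ref{2.15}(b), you correctly identify this avoidance of $T_I$ as the delicate point (since $T_I$ is not an ideal, prime avoidance does not apply to it). But your substitute --- perturbing a prime-avoidance witness $z_0$ by multiples of $x_0$ until it leaves $T_I$ --- is exactly the step you leave unproved, and it is false in general.

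Concretely, let $R=k[u,z]$ and $I=(u^2,uz)$, so $\sqrt{I}=(u)$, $I$ is sum-radical with $x_0=u$, and $J_I(R)=(u)$. Writing the class of $f$ in $R/I$ as $p(z)+cu$ with $p\in k[z]$, $c\in k$, one computes that $u\in Rf+I$ exactly when $p(0)\neq 0$, or $p=0\neq c$; hence $f\in T_I$ precisely when $p\neq 0$ and $p(0)=0$, or $f\in I$. Now take the vertices $x,y$ with classes $(z-1)^2$ and $z-1$; they are non-adjacent with $x\in yR+I$, and $\mathfrak{m}=(u,z)$ is a legitimate choice since $x\notin\mathfrak{m}$. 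Every element of $\mathfrak{m}$ has $p(0)=0$, so it lies in $T_I$ if $p\neq 0$ and in $J_I(R)=(u)$ if $p=0$: thus $\mathfrak{m}\subseteq J_I(R)\cup T_I$ even though $\mathfrak{m}\not\subseteq J_I(R)$ and $\mathfrak{m}\not\subseteq T_I$, and $\mathfrak{m}\setminus\bigl(J_I(R)\cup(yR+I)\cup T_I\bigr)=\emptyset$. In particular, for the witness $z_0=z$ one checks that $z+ru\in T_I$ for every $r\in R$, so no choice of $r$ (or indeed of any element of this $\mathfrak{m}$) can work. Repairing the argument requires choosing $\mathfrak{m}$ itself more carefully (here any $(u,z-a)$ with $a\neq 0$ succeeds), not merely adjusting $z$ inside a fixed $\mathfrak{m}$, and your proposal contains no mechanism for that; note that the same example shows the unjustified ``it follows that'' step in the paper's own proof is a genuine issue rather than a routine omission.
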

\begin{proof}
(a) If $\vert \mathcal{M}(I)\vert=1$, then $J_I(R)=\mathfrak{m}$ for some $\mathfrak{m} \in Max(R)$. If $x \in V(Q\Gamma''_I(R))$, then $xR+I \not=R$ implies that $x \in \mathfrak{m}=J_I(R)$. Therefore,
 $V(\Gamma''_I(R)) \setminus J_I(R)$ is the empty set; which is connected. So we may assume that $\vert \mathcal{M}(I)\vert > 1$. Let $x,y \in V(\Gamma''_I(R)) \setminus J_I(R)$ be two distinct elements. Without loss of generality, we may assume that $x \in yR+I$ and so $x \in yR+\sqrt{I}$ since $yR+I=yR+\sqrt{I}$. As $x \not\in J_I(R)$, there exists $\mathfrak{m}\in \mathcal{M}(I)$ such that $x \not\in \mathfrak{m}$. We claim that $\mathfrak{m} \nsubseteq J_I(R)\cup (yR+\sqrt{I})\cup T_I$. Otherwise, $\mathfrak{m} \subseteq J_I(R) \cup (yR+\sqrt{I})\cup T_I$. It follows that $\mathfrak{m} \subseteq J_I(R)$ or $\mathfrak{m} \subseteq yR+\sqrt{I}$ or $\mathfrak{m} \subseteq T_I$. But $\mathfrak{m} \neq J_I(R)$ and since $I$ is a sum-radical ideal,  $\mathfrak{m} \not \subseteq T_I$ by Lemma \ref{2.15} (b). Hence we have $\mathfrak{m}  \subseteq yR+\sqrt{I} \varsubsetneq R$, so $\mathfrak{m} =yR+\sqrt{I}$. This implies that $x \in \mathfrak{m}$, a contradiction. Choose the element $z \in \mathfrak{m}\setminus J_I(R) \cup (yR+\sqrt{I})\cup T_I$. Then $z \in V(Q\Gamma''_I(R))$.  If $y \in zR+I$, then $yR+I \subseteq zR+I$. Thus $x \in yR+I\subseteq zR+I\subseteq \mathfrak{m}  +I=\mathfrak{m}$. This contradiction shows that $z-y$. Now since $xR+I\subseteq yR+I\subseteq yR+\sqrt{I}$ and $zR+I\subseteq\mathfrak{m}+I=\mathfrak{m}$ we have $z \not \in xR+I$ and $x \not\in zR+I$. Therefore, $x-z-y$.

(b) This follows from part (a).
\end{proof}

\begin{cor}\label{43.44}
Let $R$ be a non-local ring and $I$ be a sum-radical ideal of $R$. Then $g(\Gamma''_I(R) \setminus J_I(R)) \leq 5$ or $g(\Gamma''_I(R) \setminus J_I(R)) = \infty$.
\end{cor}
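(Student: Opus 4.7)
\emph{Proof plan.} The strategy is to combine the diameter bound from Theorem \ref{2.12} with the classical graph-theoretic inequality $g(G)\leq 2\,\mathrm{diam}(G)+1$, valid for every graph $G$ that contains a cycle. Note that the sum-radical hypothesis appears superfluous for $\Gamma''_I(R)$ itself and is presumably included to parallel the $Q\Gamma''_I(R)$-analogue, where Theorem \ref{2.16} would play the role of Theorem \ref{2.12}.

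First I would split on the size of $\mathcal{M}(I)$. If $|\mathcal{M}(I)|=1$, then by the opening observation in the proof of Theorem \ref{2.12}(a), the set $V(\Gamma''_I(R))\setminus J_I(R)$ is empty, so the induced graph is trivially acyclic and $g=\infty$; this case can genuinely occur even for non-local $R$, since $I$ may sit inside a single maximal ideal. Otherwise $|\mathcal{M}(I)|\geq 2$, and Theorem \ref{2.12}(b) gives $\mathrm{diam}(\Gamma''_I(R)\setminus J_I(R))\leq 2$. At this point, if the resulting graph has no cycle then $g=\infty$, while if it contains any cycle then the girth-diameter inequality forces $g\leq 2\cdot 2+1=5$. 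Either way the stated dichotomy holds.

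The only step that demands real justification is the girth-diameter inequality itself, which I would argue by contradiction. Let $C$ be a shortest cycle of length $g\geq 6$ in a graph of diameter at most $2$, pick $u,v$ on $C$ with $d_C(u,v)=\lfloor g/2\rfloor\geq 3$, and choose a shortest ambient $u$-$v$ path $P$, necessarily of length at most $2$. Splicing $P$ with the longer $C$-arc from $u$ to $v$ yields a closed walk of length at most $2+\lceil g/2\rceil<g$, and extracting a cycle from this closed walk produces one strictly shorter than $C$, contradicting minimality. I anticipate no serious obstacle; the substantive geometric content is already packaged inside Theorem \ref{2.12} (respectively Theorem \ref{2.16}), and everything else reduces to this standard inequality plus the trivial empty-graph case.
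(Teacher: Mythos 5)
Your proposal is correct and follows essentially the same route as the paper: the paper's one-line proof invokes the diameter bound (Theorem \ref{2.16}) together with ``the technique of [1, Theorem 2.8]'', which is exactly the standard inequality $g(G)\leq 2\,\mathrm{diam}(G)+1$ that you prove in detail, plus the trivial case where the vertex set is empty. You also correctly diagnose the $\Gamma''_I(R)$ versus $Q\Gamma''_I(R)$ mismatch between the statement and the cited Theorem \ref{2.16}: as written the corollary follows from Theorem \ref{2.12} without the sum-radical hypothesis, and the intended $Q\Gamma''_I(R)$ version follows identically from Theorem \ref{2.16}.
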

\begin{proof}
This follows from Theorem \ref{2.16} and the technique of
 \cite[Theorem 2.8]{1}.
\end{proof}

\begin{thm}\label{29.7}
Let $I$ be a sum-radical ideal of $R$ such that $|\mathcal{M}(I)|\geq 3$. Then $g(Q\Gamma''_I(R)) = 3$.
\end{thm}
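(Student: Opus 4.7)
The plan is to produce an explicit triangle in $Q\Gamma''_I(R)$; since the girth of any graph containing a cycle is at least $3$, exhibiting a $3$-cycle forces $g(Q\Gamma''_I(R)) = 3$.

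First, I would pick three distinct maximal ideals $\mathfrak{m}_1, \mathfrak{m}_2, \mathfrak{m}_3 \in \mathcal{M}(I)$, which is possible because $|\mathcal{M}(I)|\geq 3$. For each $i\in\{1,2,3\}$ and $\{j,k\}=\{1,2,3\}\setminus\{i\}$, I claim there is an element
\[
x_i\in \mathfrak{m}_i \setminus \bigl(\mathfrak{m}_j\cup \mathfrak{m}_k\cup T_I\bigr).
\]
The argument mirrors the existence step inside Theorem \ref{2.16}: if no such $x_i$ existed, then $\mathfrak{m}_i$ would be contained in $\mathfrak{m}_j\cup \mathfrak{m}_k\cup T_I$, hence by the same trichotomy invoked in Theorem \ref{2.16} it would be contained in $\mathfrak{m}_j$, $\mathfrak{m}_k$, or $T_I$. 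The first two options are impossible by the maximality and distinctness of the $\mathfrak{m}_t$, and the third is impossible because $I$ is sum-radical, so Lemma \ref{2.15}(b) applies.

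Next, I would verify that each $x_i$ is actually a vertex of $Q\Gamma''_I(R)$: since $\sqrt{I}\subseteq \mathfrak{m}_j$ but $x_i\notin \mathfrak{m}_j$, we get $x_i\notin \sqrt{I}$; since $x_i\in \mathfrak{m}_i$, we get $x_iR+I\subseteq \mathfrak{m}_i\neq R$; and $x_i\notin T_I$ forces $x_iR+\sqrt{I}=x_iR+I$. For the edges, if $i\neq j$ then $x_jR+I\subseteq \mathfrak{m}_j$ while $x_i\notin \mathfrak{m}_j$, so $x_i\notin x_jR+I$; by symmetry $x_j\notin x_iR+I$, hence $x_i-x_j$. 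The elements $x_1, x_2, x_3$ are pairwise distinct (each $x_t$ lies in $\mathfrak{m}_t$ but not in the other two), so the cycle $x_1-x_2-x_3-x_1$ is a triangle in $Q\Gamma''_I(R)$.

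The main obstacle is the existence of the $x_i$, specifically the claim $\mathfrak{m}_i\not\subseteq \mathfrak{m}_j\cup \mathfrak{m}_k\cup T_I$. Since $T_I$ is not an ideal, classical prime avoidance does not literally apply; this is the same subtlety already used in the proof of Theorem \ref{2.16}, and the plan is to invoke exactly the same reasoning there, leaning on the sum-radical hypothesis via Lemma \ref{2.15}(b) to handle the $T_I$ component while using the incomparability of distinct maximal ideals to handle the $\mathfrak{m}_j\cup\mathfrak{m}_k$ component.
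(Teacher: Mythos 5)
Your proposal is correct and follows essentially the same route as the paper: both exhibit a triangle $x_1-x_2-x_3-x_1$ by choosing $x_i\in\mathfrak{m}_i\setminus\bigl(\bigcup_{j\neq i}\mathfrak{m}_j\cup T_I\bigr)$ via Lemma \ref{2.15}(b) and checking vertexhood and adjacency. In fact you are more explicit than the paper, which simply asserts the existence of the $x_i$ and the edges; your honest flag about the avoidance step for the non-ideal set $T_I$ points at a subtlety the paper itself glosses over in both Theorem \ref{2.16} and here.
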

\begin{proof}
As $\Gamma''_I(R)$ is a simple graphs, $g(\Gamma''_I(R)) \geq 3$. Assume that $\mathfrak{m_1}$, $\mathfrak{m_2}$ and
$\mathfrak{m_3}$ are distinct maximal ideals of $R$ such that $I \subseteq \mathfrak{m_i}$ for $i=1, 2, 3$. By using Lemma \ref{2.15} (b), we can choose $x_i \in \mathfrak{m_i}\setminus \bigcup^3_{j= 1, j\not=i}\mathfrak{m_j}\cup T_I$, for $i=1, 2, 3$. Then we have $x_i \in V(\Gamma''_I(R))$ for $i=1, 2, 3$ and $x_1 -x_2 - x_3 - x_1$. Thus  $g(\Gamma''_I(R)) = 3$.
\end{proof}

\begin{thm}\label{2.53}
Let $I$ be a sum-radical ideal of $R$ such that $\vert \mathcal{M}(I)\vert \geq 5$. Then $Q\Gamma''_I(R)$ is not planar.
\end{thm}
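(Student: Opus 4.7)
The plan is to exhibit a copy of $K_5$ inside $Q\Gamma''_I(R)$ and then conclude by Kuratowski's theorem. Since $|\mathcal{M}(I)|\geq 5$, I would fix five distinct maximal ideals $\mathfrak{m}_1,\ldots,\mathfrak{m}_5$ in $\mathcal{M}(I)$. In exact analogy with the selection step of Theorem~\ref{29.7}, I would then choose for each $i\in\{1,\ldots,5\}$ an element
$$x_i \in \mathfrak{m}_i \setminus \Bigl(\bigcup_{j\neq i}\mathfrak{m}_j \;\cup\; T_I\Bigr),$$
using prime avoidance to escape the four other prime ideals $\mathfrak{m}_j$ (with $j\neq i$) together with Lemma~\ref{2.15}(b), which requires the sum-radical hypothesis on $I$, to escape $T_I$.

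Next I would verify that each $x_i$ is genuinely a vertex of $Q\Gamma''_I(R)$. Since $\sqrt{I}\subseteq\mathfrak{m}_j$ for any $j\neq i$ while $x_i\notin\mathfrak{m}_j$, we get $x_i\notin\sqrt{I}$. Since $x_i\in\mathfrak{m}_i$ and $I\subseteq\mathfrak{m}_i$, we get $x_iR+I\subseteq\mathfrak{m}_i\varsubsetneq R$. And $x_iR+I=x_iR+\sqrt{I}$ follows directly from $x_i\notin T_I$. For the adjacency, suppose $i\neq j$ and $x_i\in x_jR+I$; then $x_i\in x_jR+I\subseteq\mathfrak{m}_j$, contradicting the choice of $x_i$. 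The symmetric statement $x_j\notin x_iR+I$ is identical. Thus $\{x_1,x_2,x_3,x_4,x_5\}$ induces a complete subgraph $K_5$ in $Q\Gamma''_I(R)$, and Kuratowski's theorem delivers the non-planarity.

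The only delicate point is the joint avoidance step that produces the $x_i$; this is precisely the technique already invoked in the proof of Theorem~\ref{29.7}, so I would simply cite that step rather than reproduce it. The hypothesis $|\mathcal{M}(I)|\geq 5$ enters in exactly one place: it forces the resulting clique to have five vertices, which is the smallest size that triggers non-planarity via $K_5$. One could alternatively try to produce $K_{3,3}$ and get a sharper bound on $|\mathcal{M}(I)|$, but within the framework of the paper the $K_5$ route is the direct generalisation of Theorem~\ref{29.7}'s triangle construction and fits the sum-radical setup most cleanly.
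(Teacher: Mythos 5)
Your proof is essentially identical to the paper's: both select $x_i\in\mathfrak{m}_i\setminus\bigl(\bigcup_{j\neq i}\mathfrak{m}_j\cup T_I\bigr)$ for five maximal ideals containing $I$ via Lemma~\ref{2.15}(b), observe that these form a $K_5$ in $Q\Gamma''_I(R)$, and conclude non-planarity. Your version is in fact slightly more careful, since you explicitly check that each $x_i$ is a vertex (in particular that $x_iR+I=x_iR+\sqrt{I}$) and spell out the adjacency argument, which the paper leaves implicit.
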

\begin{proof}
Let $\mathfrak{m_1}, . . . ,\mathfrak{m_5}$ be distinct maximal ideals of $R$ such that $I \subseteq \mathfrak{m_i}$ and $x_i\in \mathfrak{m_i}\setminus \bigcup^5_{j=1,j\not=i}\mathfrak{m_j}\bigcup T_I$ for $i = 1, . . . , 5$ by using Lemma \ref{2.15} (b). Then $x_1, x_2, . . . , x_5$ forms a complete subgraph
of $Q\Gamma''_I(R)$ which is isomorphic to $K_5$ and so, by \cite[p.153]{BM76}, $\Gamma''_I(R)$ is not planar.
\end{proof}

Recall that a vertex $x$ of a connected graph $G$ is said to be a \textit{cut-point} of $G$ if
there are vertices $u$, $w$ of $G$ such that $x$ is in every path from $u$ to $w$ (and
$x \not= u$, $x \not= w$). Equivalently, for a connected graph $G$, $x$ is a cut-point of
$G$ if $G- \{x\}$ is not connected \cite{3}.

\begin{thm}\label{2.56}
Let $I$ be a sum-radical ideal of $R$. Then $Q\Gamma''_I(R)\setminus J_I(R)$ has no cut-points.
\end{thm}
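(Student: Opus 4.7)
The plan is to show that, for every vertex $v$ of $G := Q\Gamma''_I(R) \setminus J_I(R)$, the graph $G \setminus \{v\}$ remains connected, so no $v$ can be a cut-point. Since Theorem~\ref{2.16}(a),(b) already guarantees that $G$ is connected with diameter at most two (and $G$ is empty when $|\mathcal{M}(I)| = 1$, in which case the claim is vacuous), the task reduces to rerouting each length-two path $u - v - w$ through an alternative common neighbor $z \neq v$ whenever $u, w \in G \setminus \{v\}$ are non-adjacent.

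To this end, I would recycle the common-neighbor construction of Theorem~\ref{2.16}(a). For non-adjacent $u$ and $w$, assume without loss of generality $u \in wR + I$, and choose $\mathfrak{m} \in \mathcal{M}(I)$ with $u \notin \mathfrak{m}$. That proof shows the set
\[
S := \mathfrak{m} \setminus \bigl( J_I(R) \cup (wR + \sqrt{I}) \cup T_I \bigr)
\]
is non-empty (using Lemma~\ref{2.15}(b) together with the sum-radical hypothesis), and that any $z \in S$ is a common neighbor of $u$ and $w$ in $Q\Gamma''_I(R)$. The crucial new step is to verify that $S$ is closed under the translation $z \mapsto z + i$ for each $i \in I$: the inclusions $I \subseteq \mathfrak{m}$, $I \subseteq J_I(R)$, and $I \subseteq \sqrt{I}$ take care of the first three exclusions, while the identities $(z+i)R + I = zR + I$ and $(z+i)R + \sqrt{I} = zR + \sqrt{I}$ give $z + i \notin T_I$, since the defining inequality of $T_I$ depends only on these two ideals.

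Once this invariance is in place, the entire coset $z + I$ lies in $S$, so $|S| \geq |I| \geq 2$ whenever $I \neq 0$, and one can pick a representative $z \in S$ distinct from $v$. The length-two path $u - z - w$ then sits inside $G \setminus \{v\}$, completing the proof that $v$ is not a cut-point. The main obstacle I expect is the $T_I$-invariance check, as the other three translation properties are essentially formal; a secondary subtlety is the degenerate case $I = 0$ in which $S$ may collapse to a singleton, and one would then have to argue separately by varying $\mathfrak{m}$ across $\mathcal{M}(I)$ and exploiting that no single $v$ can belong to every maximal ideal of $\mathcal{M}(I)$ avoiding $u$.
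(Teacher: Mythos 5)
Your proof is correct and rests on the same key observation as the paper's, namely that translation by an element $i \in I$ preserves membership in $V(Q\Gamma''_I(R))\setminus J_I(R)$ and preserves adjacency, because every relevant condition depends only on the ideals $zR+I$ and $zR+\sqrt{I}$, which are unchanged by $z \mapsto z+i$. The difference is one of economy: the paper applies the translation directly to the alleged cut-vertex $x$ itself, observing that $u-(x+i)-w$ is a second path for any $0 \neq i \in I$, whereas you rebuild a common neighbor $z$ of $u$ and $w$ from scratch via the construction in Theorem~\ref{2.16}(a) and only then invoke translation invariance of the candidate set $S$. The extra work buys nothing --- translating $x$ already produces the desired second common neighbor, and the verifications you carry out for $z+i$ are exactly those needed for $x+i$ --- but it is not wrong. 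One point in your favour: you explicitly flag that the argument requires $I \neq 0$ (so that the coset $z+I$, respectively the element $x+i$, yields something genuinely new); the paper's proof silently needs the same hypothesis when it picks $x+i \neq x$, so the degenerate case you worry about is a gap the paper leaves open rather than one you introduced.
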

\begin{proof}
Assume contrary that the vertex $x$ of $Q\Gamma''_I(R)\setminus J_I(R)$ is a cut-point. Then there exist vertices
$u, w\in V(Q\Gamma''_I(R)\setminus J_I(R))$ such that $x$ lies on every path from $u$ to $w$. By Theorem \ref{2.16}, the shortest path from $u$ to $w$ is of length 2. Suppose $u-x-w$ is a path of shortest length from $u$ to $w$.
Then one can see that $x+i \in V(Q\Gamma''_I(R))\setminus J_I(R)$ and $u-(x+i)-w$ is a path in  $Q\Gamma''_I(R)\setminus J_I(R)$, which is a contradiction.
\end{proof}

Recall that a graph is \textit{Hamiltonian} if it contains a cycle which visits each vertex exactly
once and also returns to the starting vertex.
\begin{thm}\label{2.56997}
Let $I$ be a sum-radical ideal of a finite ring $R$. If $R$ have exactly two maximal ideals $\mathfrak{m_1}$ and $\mathfrak{m_2}$ such that $\vert  \mathfrak{m_1}\vert=\vert\mathfrak{m_2} \vert$ and $I \subseteq \mathfrak{m_i}$ for $i=1, 2$. Then $Q\Gamma''_I(R)\setminus J_I(R)$ is Hamiltonian.
\end{thm}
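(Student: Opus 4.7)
The plan is to show that $Q\Gamma''_I(R)\setminus J_I(R)$ is in fact a balanced complete bipartite graph $K_{n,n}$, and then invoke the standard Hamiltonian cycle on $K_{n,n}$.

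Since $R$ is finite, every prime ideal of $R$ is maximal, so the nilradical $\sqrt{I}$ equals the intersection of the maximal ideals containing $I$, i.e.\ $\sqrt{I}=\mathfrak{m}_1\cap\mathfrak{m}_2=J_I(R)$. The condition $xR+I\ne R$ forces $x\in\mathfrak{m}_1\cup\mathfrak{m}_2$, so after deleting $J_I(R)$ every vertex lies in exactly one of $\mathfrak{m}_i\setminus\mathfrak{m}_j$; set $V_i:=V(Q\Gamma''_I(R))\cap(\mathfrak{m}_i\setminus\mathfrak{m}_j)$ for $\{i,j\}=\{1,2\}$. For $x\in\mathfrak{m}_i\setminus\mathfrak{m}_j$, the modular law together with the maximality of $\mathfrak{m}_j$ (which yields $xR+\mathfrak{m}_j=R$) gives $xR+\sqrt{I}=\mathfrak{m}_i\cap(xR+\mathfrak{m}_j)=\mathfrak{m}_i$. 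Hence the defining condition $xR+I=xR+\sqrt{I}$ of a vertex of $Q\Gamma''_I(R)$ simplifies to $xR+I=\mathfrak{m}_i$.

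For the edge structure I would argue as follows. If $x\in V_i$ and $y\in V_j$ with $i\ne j$, then $xR+I\subseteq\mathfrak{m}_i$ and $y\notin\mathfrak{m}_i$ imply $y\notin xR+I$; symmetrically $x\notin yR+I$, so $x-y$. Conversely, two distinct elements $x,y\in V_i$ satisfy $y\in\mathfrak{m}_i=xR+I$, so they are \emph{not} adjacent. Therefore the graph is exactly $K_{|V_1|,|V_2|}$, and it suffices to prove $|V_1|=|V_2|=:n\ge 2$, for then the cycle $u_1v_1u_2v_2\cdots u_nv_nu_1$, with $V_1=\{u_1,\dots,u_n\}$ and $V_2=\{v_1,\dots,v_n\}$, is Hamiltonian.

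To establish $|V_1|=|V_2|$, I would use that the finite Artinian ring $R/I$ decomposes as a product $S_1\times S_2$ of finite local rings, with maximal ideals $M_i$ corresponding to $\mathfrak{m}_i/I$. The sum-radical hypothesis translates to the nilradical $M_1\times M_2$ of $R/I$ being principal, which forces each $M_i$ to be principal in $S_i$; and $|\mathfrak{m}_1|=|\mathfrak{m}_2|$ forces the residue fields $S_i/M_i$ to have the common order $q$. A short argument (the annihilator of a generator of a principal maximal ideal in a local ring is contained in that maximal) yields $|M_i|-|M_i^2|=|M_i|(q-1)/q$, and since an element of $V_i$ corresponds to a lift of a generator of $\mathfrak{m}_i/I=M_i\times S_j$, that is, a generator of $M_i$ paired with a unit in $S_j$, one obtains $|V_i|=|I|\cdot(|M_i|-|M_i^2|)\cdot|S_j^{\times}|=|I|\,|M_1|\,|M_2|\,(q-1)^2/q$, independent of $i$. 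The main obstacle is precisely this last counting step: it requires the Artinian product decomposition of $R/I$, the observation that the sum-radical hypothesis forces each local factor's maximal to be principal, and the explicit count of generators of a principal maximal ideal in a finite local ring. The adjacency analysis and the final $K_{n,n}$ cycle are routine once the bipartite structure and the balance are in hand.
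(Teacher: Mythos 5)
Your strategy is the same as the paper's: show that $Q\Gamma''_I(R)\setminus J_I(R)$ is a balanced complete bipartite graph on parts lying in $\mathfrak{m}_1$ and $\mathfrak{m}_2$, then traverse it alternately. Your verification of the bipartite structure is correct and far more careful than the paper's one-line proof (which merely lists the two parts as $\{a_{i1},\dots,a_{it}\}$ with a common cardinality $t$ and asserts the alternating cycle): the modular-law computation $xR+\sqrt{I}=\mathfrak{m}_i$ for $x\in\mathfrak{m}_i\setminus\mathfrak{m}_j$, the reformulation of the vertex condition as $xR+I=\mathfrak{m}_i$, and the adjacency analysis are all right.

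The gap is in the balance count $|V_1|=|V_2|$, and it cannot be repaired, because the statement is false as written. In your formula $|V_i|=|I|\,(|M_i|-|M_i^2|)\,|S_j^{\times}|$, the factor $|M_i|-|M_i^2|$ counts the generators of $M_i$ only when $M_i\neq 0$; the zero ideal has the single generator $0$, whereas $|M_i|-|M_i^2|=0$. When exactly one local factor of $R/I$ is a field the two parts are genuinely unbalanced. Concretely, take $R=\Bbb F_2\times \Bbb Z_4$ and $I=0$: then $R$ has exactly the two maximal ideals $\mathfrak{m}_1=0\times \Bbb Z_4$ and $\mathfrak{m}_2=\Bbb F_2\times 2\Bbb Z_4$, both of cardinality $4$; $I$ is sum-radical since $\sqrt{0}=R(0,2)$; and $J_I(R)=\sqrt{0}=\{(0,0),(0,2)\}$. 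The vertices of $Q\Gamma''_0(R)\setminus J_0(R)$ are $(0,1)$, $(0,3)$, $(1,2)$ (the element $(1,0)$ is excluded because $(1,0)R+\sqrt{0}\neq (1,0)R$), and the graph is the path $(0,1)-(1,2)-(0,3)$, i.e. $K_{1,2}$, which is not Hamiltonian. The paper's proof makes exactly the same unjustified leap by assigning both parts the size $t$; your attempt is the more honest of the two in that it exposes precisely where an additional hypothesis is needed. Note also that even in the balanced case one must still rule out $n=1$ (for $R=\Bbb F_2\times\Bbb F_2$, $I=0$, the graph is a single edge), which your remark that $n\ge 2$ is required flags but does not establish.
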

\begin{proof}
For $i = 1, 2$ put $\mathfrak{m_1} \setminus J_I(R)\cup T_I := \{a_{i1},...,a_{it}\}$, where $t :=\vert\mathfrak{m_1} \setminus J_I(R)\cup T_I \vert$. Then one can see that $a_{11}-a_{21}-...-a_{1t}-a_{2t}-a_{11}$ is a Hamiltonian cycle in $Q\Gamma''_I(R)\setminus J_I(R)$.
\end{proof}

\begin{thm}\label{2.56}
Let $I$ be a sum-radical ideal of $R$ and $\mathcal{M}(I) = \{\mathfrak{m_1}, \mathfrak{m_2}\}$. Then  $Q\Gamma''_I(R)\setminus J_I(R)$ is a complete bipartite graph with parts $\mathfrak{m_i}\setminus J(R)\cup T_I$ for $i = 1, 2$ if and only if for every $x, y \in \mathfrak{m_i}\setminus J_I(R)\cup T_I$, for some $i = 1, 2$,  the ideals $xR+I$ and $yR+I$ are totally ordered (i.e. either
$xR+I\subseteq yR+I$ or $yR+I\subseteq xR+I$).
\end{thm}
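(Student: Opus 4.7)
The plan is to unwind the adjacency rule and the vertex conditions of $Q\Gamma''_I(R)\setminus J_I(R)$, and then handle the two implications in parallel, distinguishing whether two vertices $x,y$ lie in the same candidate part $V_i := \mathfrak{m}_i\setminus(J_I(R)\cup T_I)$ or in different parts.

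For the forward implication, I would observe that if $x,y\in V_i$ for the same $i$, then by assumption they lie in the same part of a complete bipartite graph, so they are non-adjacent in $Q\Gamma''_I(R)$. Unpacking Definition \ref{2.1}, non-adjacency means $x\in yR+I$ or $y\in xR+I$, which is exactly the statement $xR+I\subseteq yR+I$ or $yR+I\subseteq xR+I$; that is the total-order conclusion.

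For the reverse implication, I would proceed in three steps. First, I would verify the partition $V(Q\Gamma''_I(R)\setminus J_I(R)) = V_1\sqcup V_2$. Any vertex $x$ satisfies $xR+I\neq R$, so the proper ideal $xR+I$ lies inside a maximal ideal containing $I$; since $\mathcal{M}(I)=\{\mathfrak{m}_1,\mathfrak{m}_2\}$, this forces $x\in\mathfrak{m}_1\cup\mathfrak{m}_2$, and $x\notin J_I(R)=\mathfrak{m}_1\cap\mathfrak{m}_2$ pins $x$ down to exactly one $\mathfrak{m}_i$. Combined with $x\notin\sqrt{I}$ (automatic from $\sqrt{I}\subseteq J_I(R)$) and $x\notin T_I$, this places $x$ in $V_i$; the reverse inclusion $V_1\cup V_2\subseteq V(Q\Gamma''_I(R)\setminus J_I(R))$ is a routine check against Definition \ref{2.1}. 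Second, for $x,y\in V_i$ the total-order hypothesis yields $x\in yR+I$ or $y\in xR+I$, so $x$ and $y$ are non-adjacent. Third, for $x\in V_1$ and $y\in V_2$, I would argue by contradiction: if $x\in yR+I$, then $y\in\mathfrak{m}_2$ and $I\subseteq\mathfrak{m}_2$ give $x\in\mathfrak{m}_2$, whence $x\in\mathfrak{m}_1\cap\mathfrak{m}_2=J_I(R)$, contradicting $x\in V_1$; by symmetry $y\notin xR+I$, so $x$ and $y$ are adjacent.

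I expect no real obstacle beyond the bookkeeping in the partition step, which is where the hypotheses $|\mathcal{M}(I)|=2$ and $\sqrt{I}\subseteq J_I(R)$ do their work. The sum-radical assumption on $I$ does not appear to enter the logical equivalence itself, but via Lemma \ref{2.15}(b) it ensures $V_i\neq\emptyset$ and hence that the statement is not vacuous.
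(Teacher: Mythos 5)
Your proposal is correct and follows essentially the same route as the paper's proof: the forward direction is the same contrapositive unwinding of non-adjacency into the containment $xR+I\subseteq yR+I$ or $yR+I\subseteq xR+I$, and the cross-part adjacency argument ($x\in yR+I$ would force $x\in\mathfrak{m}_2$ and hence $x\in J_I(R)$) is identical. The only difference is that you explicitly verify that $V(Q\Gamma''_I(R)\setminus J_I(R))$ is partitioned by the sets $\mathfrak{m}_i\setminus(J_I(R)\cup T_I)$, a bookkeeping step the paper leaves implicit.
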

\begin{proof}
Let $Q\Gamma''_I(R)\setminus J_I(R)$ be a complete bipartite graph with parts $\mathfrak{m_i}\setminus J_I(R)\cup T_I$ for $i = 1, 2$.
Assume contrary that there exist $x, y \in \mathfrak{m_1}\setminus J_I(R)\cup T_I$ such that $xR+I\not\subseteq yR+I$ and $yR+I\not\subseteq xR+I$. Then $x$ is adjacent to $y$ in $\mathfrak{m_1}\setminus J_I(R)\cup T_I$,
which is a contradiction. Conversely, by assumption, for every elements $x$ and $y$  in $\mathfrak{m_i}\setminus J_I(R)\cup T_I$ for some $ i = 1, 2$, $x$ is not adjacent to $y$. Now, let $x \in \mathfrak{m_1}\setminus  \mathfrak{m_2}\cup T_I$ and  $y \in \mathfrak{m_2}\setminus  \mathfrak{m_1}\cup T_I$. If $x \in Ry+I$, then $x \in \mathfrak{m_2}+J_I(R)=\mathfrak{m_2}$. This contradiction shows that  $x\not \in Ry+I$. Similarly, $y\not \in Rx+I$ and so $x$ is adjacent to $y$. Thus $Q\Gamma''_I(R)\setminus J_I(R)$ is a complete bipartite graph with parts $\mathfrak{m_1}\setminus  \mathfrak{m_2}\cup T_I$ and $\mathfrak{m_2}\setminus  \mathfrak{m_1}\cup T_I$.
\end{proof}

\begin{prop}\label{2.57}
Let $I$ a sum-radical ideal of $R$ and the graph $Q\Gamma''_I(R)\setminus J_I(R)$ be $n$-partite for some positive integer $n$. Then $\vert \mathcal{M}(I)\vert\leq n$.
\end{prop}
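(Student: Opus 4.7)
The plan is to argue by contradiction: assume $|\mathcal{M}(I)| \geq n+1$ and construct a clique of size $n+1$ inside $Q\Gamma''_I(R)\setminus J_I(R)$, which is impossible in an $n$-partite graph.

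First I pick $n+1$ distinct maximal ideals $\mathfrak{m}_1,\ldots,\mathfrak{m}_{n+1}$ of $R$ containing $I$. Exactly as in the proofs of Theorem \ref{29.7} and Theorem \ref{2.53}, I want to select, for each index $i$, an element
\[
x_i \in \mathfrak{m}_i \setminus \Bigl(\bigcup_{j\ne i}\mathfrak{m}_j \cup T_I\Bigr).
\]
Since $I$ is sum-radical, Lemma \ref{2.15}(b) guarantees $\mathfrak{m}_i \not\subseteq T_I$, and since the $\mathfrak{m}_i$ are pairwise distinct maximal ideals, $\mathfrak{m}_i \not\subseteq \mathfrak{m}_j$ for $j\ne i$; so the above set is nonempty by the very same avoidance argument used in Theorems \ref{29.7} and \ref{2.53}.

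Next I verify that each chosen $x_i$ lies in $V(Q\Gamma''_I(R))\setminus J_I(R)$. For any $j\ne i$, since $\sqrt{I}\subseteq \mathfrak{m}_j$ and $x_i\notin \mathfrak{m}_j$, we get $x_i\notin \sqrt{I}$. Because $x_i\in \mathfrak{m}_i$ and $I\subseteq \mathfrak{m}_i$, we have $x_iR+I\subseteq \mathfrak{m}_i\subsetneq R$. The condition $x_i\notin T_I$ is exactly $x_iR+I=x_iR+\sqrt{I}$, so $x_i$ satisfies Definition \ref{2.1}. Finally, $J_I(R)\subseteq \mathfrak{m}_j$ for any $j\ne i$, and $x_i\notin \mathfrak{m}_j$, so $x_i\notin J_I(R)$.

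Now I check mutual adjacency: for any $i\ne j$, if $x_i\in x_jR+I$, then $x_i\in \mathfrak{m}_j+I=\mathfrak{m}_j$, contradicting the choice of $x_i$; symmetrically $x_j\notin x_iR+I$. Hence $x_i-x_j$ in $Q\Gamma''_I(R)\setminus J_I(R)$, and the set $\{x_1,\ldots,x_{n+1}\}$ induces a complete subgraph $K_{n+1}$. Since an $n$-partite graph has clique number at most $n$, this is a contradiction, and the conclusion $|\mathcal{M}(I)|\leq n$ follows.

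The only delicate point is the avoidance step that produces the $x_i$, but this is precisely the step already justified (implicitly via Lemma \ref{2.15}(b) together with standard prime avoidance for $\mathfrak{m}_i \not\subseteq \bigcup_{j\ne i}\mathfrak{m}_j$) in the proofs of Theorems \ref{29.7} and \ref{2.53}, so I would simply invoke Lemma \ref{2.15}(b) in the same fashion. Everything else reduces to bookkeeping with the definitions of vertex set and adjacency in $Q\Gamma''_I(R)$.
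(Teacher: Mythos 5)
Your argument is correct and is essentially the paper's own proof: the paper also assumes $\vert\mathcal{M}(I)\vert>n$, selects for each maximal ideal an element avoiding the other maximal ideals and $T_I$ (via Lemma \ref{2.15}(b), as in Theorems \ref{29.7} and \ref{2.53}), and derives a contradiction from two such adjacent elements being forced into the same part. Your phrasing via a $K_{n+1}$ clique versus the paper's ``two adjacent vertices in one part'' is the same pigeonhole, and the avoidance step you flag as delicate is left equally implicit in the paper.
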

\begin{proof}
Assume contrary that $\vert \mathcal{M}(I)\vert> n$. Since $Q\Gamma''_I(R)\setminus J_I(R)$ is an $n$-partite
graph, there are $\mathfrak{m_1}, \mathfrak{m_2} \in \mathcal{M}(I)$ such that there exist $x \in \mathfrak{m_1}\setminus  \mathfrak{m_2}\cup T_I$ and  $y \in \mathfrak{m_2}\setminus  \mathfrak{m_1}\cup T_I$ and $x,y$ belong to a same part. One can see that $x$ is adjacent to $y$,  which is a contradiction.
\end{proof}

\begin{thm}\label{2.5997}
Let $I$ a sum-radical ideal of $R$  such that $\vert \mathcal{M}(I)\vert\not=1$. Then the following conditions are equivalent:
\begin{itemize}
\item [(a)] $Q\Gamma''_I(R)\setminus J_I(R)$ is complete bipartite;
\item [(b)] $Q\Gamma''_I(R)\setminus J_I(R)$ is bipartite;
\item [(c)] $Q\Gamma''_I(R)\setminus J_(R)$ contains no triangles.
\end{itemize}
\end{thm}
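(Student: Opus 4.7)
The plan is to show the three conditions are equivalent via the standard cycle $(a)\Rightarrow(b)\Rightarrow(c)\Rightarrow(a)$, using the fact that the hypothesis $|\mathcal{M}(I)|\neq 1$ together with the triangle-freeness will force $|\mathcal{M}(I)|=2$, and then invoke Theorem \ref{2.56} to obtain the complete bipartite structure.

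The implications $(a)\Rightarrow(b)$ and $(b)\Rightarrow(c)$ are immediate, since any complete bipartite graph is bipartite and any bipartite graph has no odd cycles and hence no triangles. The content lies entirely in $(c)\Rightarrow(a)$. First, I would use the contrapositive of Theorem \ref{29.7}: if $|\mathcal{M}(I)|\geq 3$, then $g(Q\Gamma''_I(R))=3$ and in particular $Q\Gamma''_I(R)\setminus J_I(R)$ contains a triangle (the three witnesses $x_i\in \mathfrak{m_i}\setminus\bigcup_{j\neq i}\mathfrak{m_j}\cup T_I$ constructed in that proof lie outside $J_I(R)$ since each $x_i$ misses some maximal ideal containing $I$). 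Combined with $|\mathcal{M}(I)|\neq 1$, triangle-freeness therefore forces $\mathcal{M}(I)=\{\mathfrak{m_1},\mathfrak{m_2}\}$.

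Next I would verify the hypothesis of Theorem \ref{2.56}: for every $x,y\in \mathfrak{m_i}\setminus J_I(R)\cup T_I$ (same $i\in\{1,2\}$), the ideals $xR+I$ and $yR+I$ are totally ordered by inclusion. Assume for contradiction that neither containment holds for some pair $x,y$; then $x\not\in yR+I$ and $y\not\in xR+I$, so $x-y$ is an edge. By Lemma \ref{2.15}(b), $\mathfrak{m_j}\not\subseteq T_I$ for the other maximal ideal $j\neq i$, and by prime avoidance I can choose $z\in \mathfrak{m_j}\setminus(\mathfrak{m_i}\cup T_I)$; note $z\notin J_I(R)=\mathfrak{m_1}\cap \mathfrak{m_2}$ since $z\notin \mathfrak{m_i}$, so $z\in V(Q\Gamma''_I(R))\setminus J_I(R)$. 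If $x\in zR+I$, then $x\in zR+I\subseteq \mathfrak{m_j}+I=\mathfrak{m_j}$, forcing $x\in \mathfrak{m_i}\cap \mathfrak{m_j}=J_I(R)$, contrary to $x\notin J_I(R)$; the same argument gives $z\notin xR+I$, so $x-z$, and symmetrically $y-z$. Then $x-y-z-x$ is a triangle, contradicting (c). Thus the totally-ordered condition holds, Theorem \ref{2.56} applies, and $Q\Gamma''_I(R)\setminus J_I(R)$ is complete bipartite.

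The main obstacle is the triangle-construction step in $(c)\Rightarrow(a)$: one has to be slightly careful that the vertex $z$ built by avoidance actually lies in $V(Q\Gamma''_I(R))\setminus J_I(R)$ (hence the need for Lemma \ref{2.15}(b) and the observation that $z\notin \mathfrak{m_i}$ automatically gives $z\notin J_I(R)$ when there are only two maximal ideals over $I$). Everything else is bookkeeping: the adjacencies $x-z$ and $y-z$ follow from the same ``if $x\in zR+I$ then $x\in \mathfrak{m_j}$'' argument already used repeatedly in Theorems \ref{2.12} and \ref{2.16}.
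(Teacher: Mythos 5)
Your proof is correct, and it is a genuinely fuller argument than the paper's, whose entire proof is the pointer ``use Proposition \ref{2.57} and the technique of \cite[Theorem 2.7]{1111}.'' The skeleton is the one that pointer intends: only (c)$\Rightarrow$(a) has content; triangle-freeness plus $\vert\mathcal{M}(I)\vert\neq 1$ forces $\vert\mathcal{M}(I)\vert=2$; and then each part $\mathfrak{m}_i\setminus (J_I(R)\cup T_I)$ must be an independent set, which together with the automatic cross-edges gives complete bipartiteness. You carry out the first reduction via the contrapositive of Theorem \ref{29.7} (correctly observing that its witnesses $x_i$ lie outside $J_I(R)$ because each avoids some maximal ideal over $I$), where the paper would invoke Proposition \ref{2.57}; and you carry out the second step by verifying the totally-ordered hypothesis of the complete-bipartite criterion (the second result labelled \ref{2.56}) rather than rebuilding the bipartition by hand: an incomparable pair $x,y$ in one part is adjacent, and a vertex $z$ chosen in the other maximal ideal away from $\mathfrak{m}_i\cup T_I$ closes a triangle, contradicting (c). The checks that $x,y,z$ really are vertices of $Q\Gamma''_I(R)\setminus J_I(R)$ (using $\sqrt{I}\subseteq J_I(R)$ and $z\notin\mathfrak{m}_i\Rightarrow z\notin J_I(R)$ when $\mathcal{M}(I)=\{\mathfrak{m}_1,\mathfrak{m}_2\}$) are exactly the points worth making explicit. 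The only looseness is calling the selection of $z\in\mathfrak{m}_j\setminus(\mathfrak{m}_i\cup T_I)$ ``prime avoidance'': since $T_I$ need not be an ideal this is not literally that lemma, but it is precisely the same selection step the paper itself performs, equally informally, in Theorems \ref{2.16}, \ref{29.7} and \ref{2.53} on the strength of Lemma \ref{2.15}(b), so you are not assuming anything the paper does not.
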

\begin{proof}
Use the Proposition \ref{2.57} and the technique of \cite[Theorem 2.7]{1111}.
\end{proof}

\textbf{Conflicts of Interest.}
The author declares that there are no conflicts of interest.

\bibliographystyle{amsplain}

\end{document}